\documentclass[12pt]{amsart}

\usepackage{amsmath,amsthm, amsfonts,amssymb, stmaryrd,yfonts,pxfonts,pifont, bbm}

 \usepackage{lineno}
 \usepackage{geometry}

\newcommand{\N}{\mathbb{N}}

\newcommand{\Z}{\mathbb{Z}}
\newcommand{\F}{\mathbb{F}}
\usepackage{mathrsfs}

 \usepackage{breqn}

\usepackage{hyperref}

\newtheorem{thm}{Theorem}[section]

\newtheorem{cor}{Corollary}[section]

\newtheorem{ex}{Example}[section]
\newtheorem{Remark}{Remark}[section]
\newtheorem{prop}{Proposition}[section]
\theoremstyle{definition}
\newtheorem{definition}{Definition}[section]

\newtheorem{remark}{Remark}[section]

\begin{document}
\begin{sloppypar}

\title{Proximal Relations in Asymptotically Commutative Non-Autonomous Dynamical Systems}
\author{%
Sushmita Yadav and Puneet Sharma
}

\address{Department of Mathematics, I.I.T. Jodhpur, N.H. 65, Nagaur Road, Karwar, Jodhpur 342037, INDIA}%
\email{yadav.34@iitj.ac.in, puneet@iitj.ac.in}%
\subjclass[2020]{37B55, 37B20, 37B05}
\keywords{Proximality, Non-Autonomous Discrete Systems, Strong Convergence, Equicontinuity}


\begin{abstract}

In this paper, we investigate various forms of proximal relations for non-autonomous dynamical systems. In particular, we introduce the notion of pointwise asymptotic commutativity and asymptotic commutativity, and show that these are strictly weaker than classical commutativity. We establish conditions under which different proximal relations, including proximality, syndetic proximality, and regional proximality, exhibit invariance and structural properties. In particular, we relate proximality with equicontinuity and weak mixing, and characterize conditions where proximal relations become trivial or maximal. Further, for systems generated by uniformly and strongly convergent sequences, we derive connections between proximal pairs of the system, its product systems, and the corresponding limiting system. Several examples are provided to demonstrate the necessity of the assumptions and the sharpness of the results. 

\end{abstract}

 \maketitle

\section{Introduction}

Dynamical systems provide a fundamental framework for understanding the long-term behavior of a wide range of natural and physical processes. Classical studies have largely focused on autonomous systems, where the governing rules remain fixed over time, leading to a well-developed theory with numerous applications. However, many real-world phenomena are inherently time-dependent, with underlying parameters that evolve, making the autonomous framework inadequate in capturing their full complexity. To capture such phenomena, Sergiy Kolyada and Lubomír Snoha introduced the notion of non-autonomous discrete dynamical systems in 1996 \cite{sk1}. Since then, there has been growing interest in studying both autonomous and non-autonomous systems, particularly in understanding the additional complexities that arise in time-dependent settings \cite{sk2, LC, SD, pm, pm2, SP}.\\

Proximality in dynamical systems, introduced by Ellis and Gottschalk \cite{EG}, has been central to understanding the qualitative behavior of systems through general group actions, where the authors derived sufficient conditions under which proximality and regional proximality coincide. In \cite{c}, syndetic proximality and regional syndetic proximality were introduced, and their relations with various forms of proximality were established. Notably, it was proved in \cite{SM} that an autonomous system is proximal if and only if it is syndetically proximal. Further, it is known that any minimal autonomous system $(X,f)$ is weakly mixing if and only if proximal pairs are dense in $X\times X$ \cite{kol}, and that a semiflow is equicontinuous if and only if it possesses no non-trivial regionally proximal pair \cite{DX}. If $(X,\F)$ is a sensitive system such that the proximal cell of every $x\in X$ is dense, then the system is Li-Yorke sensitive \cite{pm2}. For minimal non-autonomous systems, it was established in \cite{LC} that the proximal cell of each element is dense in $X$ if and only if proximal pairs are dense in $X\times X$. Recently, in \cite{SP}, authors proved that any proximal system $(X,\F)$ generated by a commutative family $\F$ has a fixed point as its unique minimal subset, and that any weakly mixing system $(X,\F)$ generated by a commutative family $\F$ has a dense proximal cell for every $x\in X$. The notions of proximality in dynamical systems have been used to investigate the qualitative behavior of various systems around us. While its conjunction with other known notions gives rise to strange (chaotic) behavior in a system, exploring proximality independently can provide better insight into the system, which in turn can help in conducting qualitative analysis of the system. This motivates us to introduce and study various notions of proximality for non-autonomous dynamical systems and investigate their interrelations and dynamical consequences.\\

The paper is organized as follows. In Section \ref{prelim}, we introduce the necessary preliminaries and basic definitions required for the subsequent sections. In particular, we define the notions of pointwise asymptotic commutativity and asymptotic commutativity, and provide examples to show that these notions are strictly weaker than commutativity. In Section \ref{pinv}, we establish a criterion under which the set of proximal pairs is invariant, and relate proximality to equicontinuity and weak mixing. In Section \ref{uniform}, we study non-autonomous systems generated by uniformly convergent sequences and relate the proximal pairs of the product system with those of its components. Finally, in Section \ref{strongc}, we consider non-autonomous systems generated by strongly convergent families and relate the proximal pairs of the system with those of the limiting system

\section {Preliminaries}\label{prelim}

Let $(X,d_X)$ be a compact metric space and $\F=(f_n)_{n\geq 1}$ be a sequence of surjective continuous self maps on $X$. It is known that for any initial state $x_0\in X$, the sequence $\F$ generates a non-autonomous system via the relation $x_{n}=f_{n}(x_{n-1})$. Let $(X,\F)$ denote the non-autonomous system generated by the sequence $\F$. For any initial state $x_0\in X$, let $\omega_{k, X}(x_0)=f_k\circ\ldots\circ f_1(x_0)$ denote the state of the system after $k$ instances of time. In case the underlying system $X$ is unambiguous, we denote the state of the system $\omega_{k,X}(x_0)$ by $\omega_k(x_0)$. Let $\mathcal{O}_\F(x)=\{\omega_{n}(x): n\in \mathbb{N}\}$ denote the \emph{orbit} of the point $x$ in $X$ under action of family $\F$. In case $f_i=f$ for all $i\in \N$, we denote orbit of $x\in X$ as $\mathcal{O}_f(x)$. For any $n,m\in \N$, let $f_{n}^{n+m}= f_{n+m}\circ f_{n+m-1}\circ\ldots\circ f_{n+1}$. In case the system is generated by a family of homeomorphisms, we consider the full orbit of any point $x$ in $X$ i.e., $\mathcal{O}_\F(x)=\{\omega_n(x): n\in \mathbb{Z}\}$ (where $\omega_{-n}(x)=(\omega_n)^{-1}(x)= f_1^{-1}\circ f_2^{-1}\circ\ldots f_n^{-1}(x)$ for all $n\in\N$). We say that the system $(X,\F)$ is a commutative if $f_i\circ f_j= f_j\circ f_i$ for all $i,j\in \mathbb{N}$.\\

A subset $S=\{s_n:n\in\N\}$ of $\N$ is \emph{syndetic} if there exists $M\in\N$ such that $|s_{n+1}-s_n|\leq M$ for all $n\in\N$. In such a case, we refer to $S$ as an \emph{$M$-syndetic} set. A subset $T\subseteq \N$ is said to be \emph{thick} if for any $k\in \N$ there exists $t\in \N$ such that $\{t+1,t+2,\ldots,t+k\}\subset T$. A set $S\subseteq \N$ is said to be \emph{thickly syndetic} if for any $k\in \N$, the set $\{t\in \N:\{t+1,t+2,\ldots,t+k\}\subset S\}$ is syndetic. A set $Y\subseteq X$ is said to be \emph{invariant} if $\mathcal{O}_\F(y)\subseteq Y$ for any $y\in Y$.

\begin{definition}\cite{pm2}
A nonautonomous system $(X,\mathbb{F})$ is said to be 
\emph{equicontinuous at a point} $x\in X$, if for each $\epsilon>0$, there exists $\delta_x>0$ such that $d_X(x, y)<\delta$ implies $d_X(\omega_n(x),\omega_n(y))<\epsilon$ for all $y\in X$, $n\in \N$. In case $\delta>0$ is independent to the choice of $x\in X$ then $(X,\F)$ is called \emph{equicontinuous nonautonomous system}.
\end{definition}

\begin{definition}\cite{pm2}
A nonautonomous system $(X,\mathbb{F})$ is said to be \emph{sensitive at a point} $x\in X$, if there exists $\delta_x>0$ such that for each neighborhood $U_x$ of $x$ there exists $k\in\N$ such that $diam(\omega_k(U_x))>\delta_x$.
In case $\delta>0$ is independent to the choice of $x\in X$ then $(X,\F)$ is called \emph{sensitive non autonomous system}.

\end{definition}



 \begin{definition}\cite{BP}
 A system $(X, \mathbb{F})$ is said to be \emph{weakly mixing} if for any collection of non-empty open sets $U_1, U_2, V_1, V_2$ in $X$, there exists a natural number $k$ such that $\omega_k(U_i)\cap V_i \neq\emptyset$, for $i=1,2$.
 \end{definition}

\begin{remark}[~\cite{pm}]
It is known that if the generating family $\mathbb{F}$ is commutative then $(X,\mathbb{F})$ is weakly mixing if and only if for each collection of non-empty open sets $U_1, U_2,\ldots, U_n$ and $V_1, V_2,\ldots, V_n$ in $X$ there exists $k\in\mathbb{N}$ such that $\omega_k(U_i)\cap V_i\neq\emptyset$ for $i=1,2,\ldots,n$.    
\end{remark} 

\begin{definition}\cite{LC} Let $(X,\F)$ be any non autonomous systems, then any pair $(x, y)$ is \emph{proximal} for $(X,\mathbb{F})$  if $\liminf\limits_{n} ~~d_X(\omega_n(x), \omega_n(y))=0$.
\end{definition}

\begin{definition} \cite{LC} 
Let $(X,\F)$ be any non autonomous systems, any pair $(x, y)$ is \emph{syndetically proximal} for $(X,\mathbb{F})$ if for any $\epsilon>0$, the set $\{n\in \N: d_X(\omega_n(x),\omega_n(y))<\epsilon\}$ is syndetic.
\end{definition}

We now introduce the notions of regionally proximal and regionally syndetical proximal pairs in non-autonomous dynamical system $(X,\F)$.

\begin{definition}
Let $(X,\F)$ be any non autonomous systems, then any pair $(x,y)$ is \emph{regionally proximal} for $(X,\F)$ if for any $\epsilon>0$ and neighborhoods $U,V$ of $x$ and $y$ respectively, there exists $z_1\in U$ and $z_2\in V$ such that that the set $\{n\in \N:d_X(\omega_{n}(z_1),\omega_{n}(z_2)<\epsilon\}$ is non-empty.
\end{definition}

\begin{definition}
Let $(X,\F)$ be any non autonomous systems, then any pair $(x,y)$ is \emph{regionally syndetically proximal} for $(X,\F)$ if for any $\epsilon>0$ and neighborhoods $U,V$ of $x$ and $y$ respectively, there exists $z_1\in U$ and $z_2\in V$ such that that the set $\{n\in \N:d_X(\omega_{n}(z_1),\omega_{n}(z_2)<\epsilon\}$ is syndetic.
\end{definition}

Let $P(X,\F)$, $L(X,\F)$, $Q(X,\F)$, and  $M(X,\F)$ denote the set of proximal, syndetically proximal, regionally proximal pairs, and regionally syndetically proximal respectively for $(X,\mathbb{F})$. The system $(X,\F)$ is said to be proximal (syndetically proximal/regionally proximal/regionally syndetically proximal) if the set $P(X,\F)$ ( $L(X,\F)$ / $Q(X,\F)$ / $M(X,\F)$ ) coincides with $X\times X$. For any $A,B \subset X\times X$, let $AB = A\circ B= \{(x,y): (x,z)\in A~ \text{,}~ (z,y)\in B ~~\text{for some}~~ z\in X\}$. 

Let $C(X)$ denote the collection of all continuous self maps on $X$ and $D_{C(X)}(f,g) = \sup \limits_{x\in X} d_X(f(x), g(x))$ (for any $f,g\in C(X))$. It is known that $D_{C(X)}$ defines a metric on $C(X)$ (known as the supremum metric). Further, any sequence $(f_n)$ in $C(X)$ converges uniformly to $f$ (in $C(X)$) if and only if it converges in $(C(X),D_{C(X)})$. We say that a sequence $(f_n)$ converges strongly to $f$ if $\sum \limits_{n=1}^{\infty} D_{C(X)}(f_n,f)<\infty$. It can be seen that every strongly convergent sequence is uniformly convergent (but the converse is not true). \\

It may be noted that in case the generating maps $f_n$'s coincide, the definitions coincide with the known notions for an autonomous dynamical system \cite{bc, bs, DV}. Some basic concepts and recent works in non-autonomous dynamical systems can be found in the literature \cite{kol, GD,sk1,sk2}.\\

The notion of commutativity has been extensively used in the literature to investigate the dynamical behavior of non-autonomous systems (see \cite{KB, SD, SP}). However, exact commutativity is often too restrictive and fails even for simple families of maps arising in applications. For example, two affine maps of the form $f(x)=ax+b$ and $g(x)=cx+d$ on a compact interval do not commute in general. On the other hand, many non-autonomous systems exhibit an approximate commutative behavior asymptotically, especially when the generating sequence converges to a limiting map or when the iterates approach the identity map in the long term.

Motivated by this observation, we introduce the notions of \emph{pointwise asymptotic commutativity} and \emph{asymptotic commutativity}, which are strictly weaker than commutativity and are satisfied by a much broader class of systems. 

\begin{definition} A non autonomous system $(X,\F)$ is said to be
\begin{enumerate}
    \item \emph{pointwise asymptotically commutative} if $\lim \limits_{n\rightarrow \infty}d_X(\omega_n\circ \omega_r(x),\omega_r\circ\omega_n(x))=0$ for every $x\in X$ and $r\in \N$.
    \item \emph{asymptotically commutative} if $\lim \limits_{n\rightarrow \infty}D_{C(X)}(\omega_n\circ \omega_r,\omega_r\circ\omega_n)=0$ for every $r\in \N$.
\end{enumerate}
    
\end{definition}

\begin{remark}
 It may be noted that for any non-autonomous system $(X,\F)$,
 \begin{enumerate}
     \item $(X,\F)$ is commutative
     \item $(X,\F)$ is asymptotically commutative
     \item $(X,\F)$ is pointwise asymptotically commutative
 \end{enumerate}
 we have, $$(1)\implies (2)\implies (3)$$
We now present an example demonstrating that the implications are strict, i.e., the converse of any implication does not hold in general.
 \end{remark}

 \begin{ex}
Let $X=[0,1]$ and define 
$f_n:[0,1]\rightarrow[0,1]$ as 
$$f_1(x)=
\begin{cases}
\frac{1}{2}(2x)^{\frac{1}{2}} & x\in[0,\frac{1}{2}],\\
\frac{1}{2}(2x-1)^{\frac{1}{2}}+\frac{1}{2}  & x\in[1/2,1].
\end{cases}
$$ 
and for $n\geq 2$
$$f_n(x)=
\begin{cases}
\frac{1}{2}(2x)^{\frac{n^2}{n^2-1}} & x\in[0,\frac{1}{2}],\\
\frac{1}{2}(2x-1)^{\frac{n^2}{n^2-1}}+\frac{1}{2}  & x\in[1/2,1].
\end{cases}
$$  

Let $([0,1],\F)$ be non autonomous system generated by sequence $(f_n)_{n\in \N}$. Also, it may be noted that the family $\F$ is non-commutative. Further we have,  
$$
\omega_n(x)=
\begin{cases}
\frac{1}{2}(2x)^{\frac{n}{n+1}} & x\in[0,\frac{1}{2}],\\
\frac{1}{2}(2x-1)^{\frac{n}{n+1}}+\frac{1}{2}  & x\in[1/2,1].
\end{cases}
$$  
Further note that, $\omega_n\to Id_{[0,1]}$ uniformly and consequently for every $r\in \N$
$$
\lim \limits_{n\rightarrow \infty}D_{C(X)}(\omega_n\circ \omega_r,\omega_r\circ\omega_n)=0.
$$
Hence $([0,1],\F)$ is asymptotically commutative.
 \end{ex}

 \begin{ex}

Let $\varphi_n : \mathbb{T}^1 \to \mathbb{R}$ be defined as

$$
\varphi_n(\theta) =
\begin{cases}
0 & \theta \notin [1/n, 2/n], \\[6pt]
\frac{1}{2} - \left|\theta - \frac{3}{2n}\right| \cdot n & \theta \in [1/n, 2/n].
\end{cases}
$$

Define a sequence $\{\omega_n\}$ of maps on $\mathbb{T}^2$ by
$$
\omega_n(\theta,\phi)=
\begin{cases}
(\theta,\ \phi+\varphi_n(\theta)) & \text{if } n \text{ is even},\\
(\theta+\varphi_n(\phi),\ \phi) & \text{if } n \text{ is odd},
\end{cases}
\quad (\mathrm{mod}\,1).
$$

\noindent Since each $\omega_n$ is a homeomorphism of $\mathbb{T}^2$, we can extract $f_n$ using the recursive identity $f_n=\omega_n\circ\omega_{n-1}^{-1}$. Let $(\mathbb{T}^2,\F)$ be non autonomous system generated by sequence $(f_n)_{n\in \N}$. Moreover, it may be noted that since the family $\{\omega_n\}$ is non-commutative, $\F$ is a non-commutative family. \\

\noindent Further note that, for each $(\theta,\phi)\in \mathbb{T}^2$, since $\varphi_n(x)\to 0$ for every $x\in \mathbb{T}^1$, we have
$$\omega_n(\theta,\phi)\to (\theta,\phi).$$
Thus $\omega_n \to \mathrm{Id}_{\mathbb{T}^2}$ pointwise and consequently for any $r$ and $(\theta,\phi)\in \mathbb{T}^2$, 

$$
d\big(\omega_n\circ \omega_r(\theta,\phi),\ \omega_r\circ \omega_n(\theta,\phi)\big)\to 0.
$$
Hence $(X,\F)$ is pointwise asymptotically commutative.\\

Now fix $r$ odd. Choose $\phi_0=\frac{3}{2r}$ and for each even $n$, choose $\theta_n = \frac{3}{2n} - \frac{1}{2} \ (\mathrm{mod}\,1).$ Then we have $\varphi_n(\theta_n)=0, $ and $\varphi_n(\theta_n+\tfrac{1}{2})=\tfrac{1}{2}.$
Further note that
$$d\big(\omega_n\circ \omega_r(\theta_n,\phi_0),\ \omega_r\circ \omega_n(\theta_n,\phi_0)\big)=\tfrac{1}{2}.$$
Thus, $\sup_{(\theta,\phi)} d\big(\omega_n\circ \omega_r,\ \omega_r\circ \omega_n\big)\ge \tfrac{1}{2}$
for infinitely many $n$. Therefore, the convergence is not uniform, and hence $(X,\F)$ is not asymptotically commutative.
 \end{ex}

\section{Invariance, Equicontinuity and Weak Mixing}\label{pinv} In this section, we derive some results for proximal relations for equicontinuous and weak mixing systems.

\begin{thm}\label{inv}
Let $(X,\F)$ be a non-autonomous system.
\begin{enumerate}
\item  If $(X,\mathbb{F})$ is pointwise asymptotically commutative, then the set of proximal (syndetically proximal) pairs is invariant.
\item  If $(X,\mathbb{F})$ is asymptotically commutative, then the set of regionally proximal (regionally syndetically proximal) pairs is invariant.
\end{enumerate}

\end{thm}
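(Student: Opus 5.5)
Throughout, I read "invariant" for a set $R\subseteq X\times X$ of pairs as invariance under the product action: $(x,y)\in R$ implies $(\omega_r(x),\omega_r(y))\in R$ for every $r\in\N$. The basic tool is the triangle inequality
\begin{align*}
d_X(\omega_n\omega_r(x),\omega_n\omega_r(y)) \le{}& d_X(\omega_n\omega_r(x),\omega_r\omega_n(x)) + d_X(\omega_r\omega_n(x),\omega_r\omega_n(y))\\
&+ d_X(\omega_r\omega_n(y),\omega_n\omega_r(y)).
\end{align*}
In it, the outer terms are controlled by (pointwise) asymptotic commutativity. The middle term is controlled by uniform continuity of the single map $\omega_r$ on the compact space $X$.

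For (1), fix $r$ and $\epsilon>0$. By uniform continuity of $\omega_r$, choose $\delta>0$ such that $d_X(a,b)<\delta$ implies $d_X(\omega_r a,\omega_r b)<\epsilon/3$. Pointwise asymptotic commutativity at the two points $x$ and $y$ gives $N$ such that both outer terms are $<\epsilon/3$ for all $n\ge N$. If $(x,y)$ is proximal, the set $\{n: d_X(\omega_n x,\omega_n y)<\delta\}$ is infinite, so it contains some $n\ge N$. For that $n$ the displayed sum is $<\epsilon$; as $\epsilon$ is arbitrary, $\liminf_n d_X(\omega_n\omega_r x,\omega_n\omega_r y)=0$. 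If $(x,y)$ is syndetically proximal, the same $\delta$-set is syndetic, and its intersection with $[N,\infty)$ is still syndetic with the same gap bound. Hence $\{n: d_X(\omega_n\omega_r x,\omega_n\omega_r y)<\epsilon\}$ contains a syndetic set and is therefore syndetic.

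For (2), let $(x,y)$ be regionally (syndetically) proximal, and let $U',V'$ be neighbourhoods of $\omega_r x,\omega_r y$. Put $U=\omega_r^{-1}(U')$ and $V=\omega_r^{-1}(V')$; these are neighbourhoods of $x$ and $y$. Now asymptotic commutativity is uniform, so there is $N$ with $D_{C(X)}(\omega_n\omega_r,\omega_r\omega_n)<\epsilon/3$ for all $n\ge N$, simultaneously at all points. Apply the hypothesis to $U,V$ with tolerance $\delta$ to get $z_1\in U$ and $z_2\in V$. Then $\omega_r z_1\in U'$, $\omega_r z_2\in V'$, and the triangle estimate works for every admissible $n\ge N$. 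In the syndetic case this finishes exactly as in (1), by intersecting with a tail.

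The main obstacle is the regionally proximal case of (2). There the definition only provides \emph{some} $n$ with $d_X(\omega_n z_1,\omega_n z_2)<\delta$, and this $n$ could be smaller than $N$. I would handle it by shrinking. Take a decreasing sequence of neighbourhoods $U_k\to x$, $V_k\to y$ with tolerances $\delta_k\to0$, and obtain witnesses $z_1^k\in U_k$, $z_2^k\in V_k$ with times $n_k$.

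If the $n_k$ are unbounded, choose one with $n_k\ge N$, $U_k\subseteq U$, $V_k\subseteq V$ and $\delta_k\le\delta$, and conclude as above.

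If the $n_k$ are bounded, pass to a subsequence along which $n_k=m$ is constant. Continuity of $\omega_m$ together with $z_1^k\to x$, $z_2^k\to y$ and $\delta_k\to0$ gives $\omega_m x=\omega_m y$. Then for every $n\ge m$ we have $\omega_n=f_m^n\circ\omega_m$, so $\omega_n x=\omega_n y$ for all $n\ge m$. Taking $z_1=x\in U$ and $z_2=y\in V$ then gives $d_X(\omega_n z_1,\omega_n z_2)=0<\delta$ for every $n\ge m$, in particular for some $n\ge N$, and the argument finishes as in the unbounded case.
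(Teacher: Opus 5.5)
Your proof is correct and follows the paper's route: the same three-term triangle inequality, with the middle term controlled by uniform continuity of $\omega_r$ and the outer terms by (pointwise) asymptotic commutativity. (One small point in (1): to conclude $\liminf=0$ you need arbitrarily large $n\ge N$ in the $\delta$-set rather than a single one, and the fact that this set is infinite gives you that.) Your separate treatment of bounded witness times in the regionally proximal case fills a step the paper passes over, since the paper applies $D_{C(X)}(\omega_{k_n}\circ\omega_r,\omega_r\circ\omega_{k_n})\to 0$ along $(k_n)$ without checking that $k_n\to\infty$.
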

 
\begin{proof}
(1) Let $(x,y)$ be a proximal pair for $(X,\mathbb{F})$ and $r\in \mathbb{N}$ be fixed. As $(x,y)$ is proximal, there exists a sequence $(n_k)$ of integers such that $\lim \limits_{k\rightarrow \infty} d_X(\omega_{n_k}(x), \omega_{n_k}(y))=0$. Since $\omega_r$ is continuous, $\lim \limits_{k\rightarrow \infty} d_X(\omega_{r}(\omega_{n_k}(x)), \omega_{r}(\omega_{n_k}(y))=0$. Also as $(X,\F)$ is pointwise asymptotically commutative, we have $\lim \limits_{k\rightarrow \infty} d_X(\omega_{n_k}(\omega_r(x)), \omega_{n_k}(\omega_r(y))=0$ and thus $(\omega_r(x),\omega_r(y))$ is a proximal pair for $(X,\mathbb{F})$. Since every instance of proximality for $(x,y)$ gives an instance of proximality for $(\omega_r(x),\omega_r(y))$, the pair $(\omega_r(x),\omega_r(y))$ is proximal (respectively, syndetically proximal).\\

(2) Let $(x,y)$ be a regionally proximal pair for $(X,\mathbb{F})$ and $r\in \mathbb{N}$ be fixed. As $(x,y)$ is regionally proximal, there exists sequences $(x_n)$, $(y_n)$ in $X$ and $(k_n)$ in $\N$ such that $x_n\rightarrow x$, $y_n\rightarrow y$ and $d_X(\omega_{k_n}(x_n),\omega_{k_n}(y_n))\rightarrow 0$. As $\omega_r$ is continuous we have $\omega_r(x_n)\rightarrow \omega_r(x)$, $\omega_r(y_n)\rightarrow \omega_r(y)$ and $d_X(\omega_r\circ\omega_{k_n}(x_n),\omega_r\circ\omega_{k_n}(y_n))\rightarrow 0$. Further as  $(X,\mathbb{F})$ is asymptotically commutative, $D_{C(X)}(\omega_{k_n}\circ\omega_r,\omega_r\circ\omega_{k_n})\rightarrow 0$, and thus $d_X(\omega_{k_n}(\omega_r(x_n)),\omega_{k_n}(\omega_r(y_n)))\rightarrow 0$. Consequently, $(\omega_r(x),\omega_r(y))$ is a regionally proximal pair. Once again, as the instances of proximality for $(x,y)$ are instances of proximality for  $(\omega_r(x),\omega_r(y))$, if $(x,y)$ is a regionally syndetically proximal pair then the pair $(\omega_r(x),\omega_r(y))$ is a regionally syndetically proximal pair. As the proof holds for each $r\in \N$, regionally proximality (regionally syndetically proximality) for $(x,y)$ ensures the same for $(\omega_r(x),\omega_r(y))$.
\end{proof}

\begin{Remark}
The above result establishes the invariance of various forms of proximality for a pointwise asymptotic commutative (asymptotic commutative) non-autonomous system. However, the result holds under the strict assumption of pointwise asymptotic commutativity and does not hold when the assumption is dropped. We now give an example in support of our claim.
\end{Remark}

\begin{ex} \label{a}
Let $X=[0,1]$ be the unit interval and let $$f_1(x)=f(x)= \left\{
		\begin{array}{lr}
			0 &  : 0\leq x \leq \frac{1}{4}, \\
			{4x-1} & : \frac{1}{4}\leq x\leq \frac{1}{2} \\
             {-\frac{4}{3}x+\frac{5}{3}} & : \frac{1}{2}\leq x\leq \frac{3}{4} \\
             \frac{2}{3} &  : \frac{3}{4}\leq x \leq 1
		\end{array}
		\right\},$$ 
   $$f_n(x)=g(x) = \left\{
		\begin{array}{lr}
			2x &  : 0\leq x \leq \frac{1}{2}, \\
			{2-2x} & : \frac{1}{2}\leq x\leq 1
            
		\end{array}
		\right\} ~~\text{~~for~~}~~ n\geq 2.$$

It may be noted that for $r=1$, $d_X(\omega_n\circ \omega_1(\frac{1}{2}), \omega_1\circ \omega_n(\frac{1}{2}))=\frac{2}{3}\nrightarrow 0$. Consequently, $(X,\F)$ is not pointwise asymptotically commutative. Also, the pair $(0,\frac{1}{2})$ is proximal (regionally proximal/ syndetically proximal/ regionally syndetically proximal), but $(\omega_1(0),\omega_1(\frac{1}{2})) = (0,1)$ is not proximal at any of the instances (as 
$(\omega_n(0),\omega_n(1))=(0,\frac{2}{3})$ for all $n>1$). Consequently, the result does not hold in the absence of pointwise asymptotically commutativity (asymptotically commutative).
\end{ex}

\begin{prop}
For any non-autonomous dynamical system $(X,\F)$, $Q(X,\F)$ is a closed relation in $X\times X$.
\end{prop}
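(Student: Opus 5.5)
The plan is to show that the complement of $Q(X,\F)$ is open, or equivalently that $Q(X,\F)$ contains all its limit points. Since $X\times X$ is a metric space, sequential closure suffices. So take a sequence $(x_m,y_m)\in Q(X,\F)$ with $(x_m,y_m)\to(x,y)$, and show that $(x,y)\in Q(X,\F)$ by verifying the definition directly.

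Fix $\epsilon>0$ and neighborhoods $U$ of $x$ and $V$ of $y$. Since $U$ and $V$ are open sets containing $x$ and $y$, and $x_m\to x$, $y_m\to y$, there is an index $m$ with $x_m\in U$ and $y_m\in V$. Then $U$ and $V$ are also neighborhoods of $x_m$ and $y_m$ respectively.

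Now apply regional proximality of $(x_m,y_m)$ with the same $\epsilon$ and the same neighborhoods $U,V$. This yields $z_1\in U$ and $z_2\in V$ such that the set $\{n\in\N: d_X(\omega_n(z_1),\omega_n(z_2))<\epsilon\}$ is non-empty. This is exactly the condition required for $(x,y)$ with respect to $\epsilon$, $U$ and $V$. Since $\epsilon$, $U$ and $V$ were arbitrary, $(x,y)\in Q(X,\F)$, and hence $Q(X,\F)$ is closed.

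There is essentially no obstacle. The only point needing care is that the definition quantifies over all neighborhoods. If "neighborhood" allows non-open sets, replace $U$ and $V$ by their interiors, which still contain $x$ and $y$; the points $z_1,z_2$ produced then lie in the original $U$ and $V$. No continuity of the maps $f_n$ or compactness of $X$ is needed; the argument is purely that the defining condition is stable under approximation of the base points.
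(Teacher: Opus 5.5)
Your proof is correct and essentially the same as the paper's argument. The paper likewise picks a point of $Q(X,\F)$ inside the given neighborhood $U\times V$ of the closure point and applies regional proximality of that point with the same $U$, $V$ and $\epsilon$; you phrase the closure sequentially, which is harmless in the metric space $X\times X$.
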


\begin{proof}
Let $(x_1,x_2)\in \overline{Q(X,\F)}$, $U \times V$ be a neighborhood of $(x_1,x_2)$ and $\epsilon>0$ be given. As $(x_1,x_2)\in \overline{Q(X,\F)}$, there exists a point $(z_1,z_2)\in Q(X,\F)$ such that $(z_1,z_2)\in U \times V$. As $(z_1,z_2)$ is a regionally proximal pair, there exists $(w_1,w_2)\in U \times V$ and $k\in \mathbb{N}$ such that $d_X(\omega_{k}(w_1),\omega_{k}(w_2))<\epsilon$. Thus, $(x_1,x_2)$ is a regional proximal pair, and the proof is complete.
\end{proof}

\begin{thm}
For any non-autonomous dynamical system $(X,\mathbb{F})$ generated by a family of homeomorphisms, $(X,\mathbb{F})$ is equicontinuous if and only if $Q(X,\F)=\triangle$, where $\triangle$ denotes the diagonal of the set $X\times X$.
\end{thm}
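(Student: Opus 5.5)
The plan is to run the classical autonomous argument (equicontinuous $\iff$ no non-trivial regionally proximal pair) while keeping track of which maps, $\omega_n$ or $\omega_n^{-1}$, are used. Since the system is generated by homeomorphisms, I would read both equicontinuity and regional proximality along the full orbit, following the convention fixed in Section \ref{prelim}. That is, I take the index $n$ to range over $\mathbb{Z}$, with $\omega_{-n}=\omega_n^{-1}$. Throughout I note that $\triangle\subseteq Q(X,\F)$ trivially (take $z_1=z_2=x\in U\cap V$), so only the inclusion $Q(X,\F)\subseteq\triangle$ is at stake.

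\emph{Equicontinuity implies $Q(X,\F)=\triangle$.} Suppose $(x,y)\in Q(X,\F)$ with $x\neq y$, and put $\epsilon=d_X(x,y)/3$. By equicontinuity of the full family $\{\omega_n:n\in\mathbb{Z}\}$, I choose $\delta>0$ such that $d_X(a,b)<\delta$ implies $d_X(\omega_m(a),\omega_m(b))<\epsilon$ for all $m\in\mathbb{Z}$. Regional proximality, applied with $U=B(x,\epsilon)$, $V=B(y,\epsilon)$ and tolerance $\delta$, gives $z_1\in U$, $z_2\in V$ and $n$ with $d_X(\omega_n(z_1),\omega_n(z_2))<\delta$. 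Applying $\omega_{-n}$ gives $d_X(z_1,z_2)<\epsilon$, hence $d_X(x,y)<3\epsilon=d_X(x,y)$, a contradiction. The essential input here is that equicontinuity controls the inverses $\omega_n^{-1}$, which is why the homeomorphism hypothesis appears.

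\emph{$Q(X,\F)=\triangle$ implies equicontinuity.} Suppose the system is not equicontinuous. Then there are $\epsilon>0$, pairs $(x_k,y_k)$ with $d_X(x_k,y_k)\to 0$, and indices $n_k$ with $d_X(\omega_{n_k}(x_k),\omega_{n_k}(y_k))\geq\epsilon$. By compactness I pass to a subsequence with $\omega_{n_k}(x_k)\to a$ and $\omega_{n_k}(y_k)\to b$, where $d_X(a,b)\geq\epsilon$. Set $p_k=\omega_{n_k}(x_k)$ and $q_k=\omega_{n_k}(y_k)$. Then $p_k\to a$, $q_k\to b$, and $\omega_{-n_k}(p_k)=x_k$, $\omega_{-n_k}(q_k)=y_k$ are arbitrarily close. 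Hence every neighbourhood pair $U\times V$ of $(a,b)$ contains points whose orbits come $\epsilon'$-close for every $\epsilon'>0$, so $(a,b)\in Q(X,\F)\setminus\triangle$. This is the desired contradiction. The argument uses only compactness and the definitions, in the sequential form already used in the proof of Theorem \ref{inv}(2).

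\emph{Main obstacle.} The delicate point is the direction of time. If regional proximality is read only for forward times $n\in\N$ while equicontinuity is read on the full orbit, the converse above only yields the pair $(a,b)$ when the failure occurs for a negative index, i.e.\ for the inverses. In that case I would try to recover forward equicontinuity from equicontinuity of $\{\omega_n^{-1}\}$ via Arzel\`a--Ascoli: every limit $g$ of a subsequence of $\omega_{n_k}^{-1}$ should be shown injective, and then uniform convergence on compact $X$ transfers to the inverses. I expect this injectivity step to be the hardest part, and it is exactly where the homeomorphism assumption, and the full-orbit convention, must be invoked.
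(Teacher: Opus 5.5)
Your proof is correct under the full-orbit reading of both equicontinuity and regional proximality, and it is essentially the paper's argument: the forward direction pulls closeness back through $\omega_{-n}$, and the converse extracts limits of $(\omega_{n_k}(x_k),\omega_{n_k}(y_k))$ and witnesses regional proximality at the times $-n_k$. Your direct negation of uniform equicontinuity, which covers $n_k$ of either sign, is slightly cleaner than the paper's appeal to a point of sensitivity with $k_n\in\N$. Your fallback in the last paragraph would fail, because with forward-only $Q(X,\F)$ the statement is false: take $\omega_n=h_n^{-1}$, where the $h_n$ are uniformly Lipschitz increasing homeomorphisms of $[0,1]$ converging uniformly to a map that collapses $[\frac13,\frac23]$; then $Q(X,\F)=\triangle$ for forward times, yet $\{\omega_n\}$ is not equicontinuous and $\lim h_n$ is not injective.
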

 
\begin{proof}
Let $(X,\mathbb{F})$ be equicontinuous, $(x,y)\in Q(X,\F)$ and $\epsilon>0$ be given. As $(X,\mathbb{F})$ is equicontinuous, there exists $\delta>0$ such that $d_X(u,v)<\delta$ implies $d_X(\omega_m(u),\omega_m(v))<\epsilon$ for all $u,v\in X,~~m\in \Z$. As the pair $(x,y)$ is regionally proximal, there exist sequences $(x_n)$ and $(y_n)$ converging to $x$ and $y$ respectively and a sequence  $(k_n)$ in $\N$ such that $d_X(\omega_{k_n}(x_n),\omega_{k_n}(y_n))$ converges to zero. Thus, there exists $n_0\in \N$ such that $d_X(\omega_{k_n}(x_n),\omega_{k_n}(y_n))<\delta$ for $n\geq n_0$ and hence $d_X(\omega_m(\omega_{k_n}(x_n)),\omega_m(\omega_{k_n}(y_n)))<\epsilon$ for all $m\in \Z, n\geq n_0$. As the argument holds for any $\epsilon>0$, $d_X(x_n,y_n)$ converges to $0$ and thus we have $x=y$. \\

Conversely, let $Q(X,\F)=\triangle$ and $x\in X$ be a point of sensitivity (with sensitivity constant $\delta>0$). As the system is sensitive at $x\in X$, for any $n\in \N$, there exists $y_n\in X$ and $k_n\in \N$ such that $d_X(x,y_n)<\frac{1}{n}$ and $d_X(\omega_{k_n}(x),\omega_{k_n}(y_n))>\delta$. Consequently, if $x_n=\omega_{k_n}(x)$ and $z_n=\omega_{k_n}(y_n)$ converge to $p$ and $q$ respectively (after passing through subsequences if required), then $d_X(\omega_{-k_n}(x_n),\omega_{-k_n}(z_n))= d_X(x,y_n)$ converges to zero. Thus, $(p,q)$ is a non-trivial pair in $Q(X,\F)$ (which is a contradiction) and thus $(X,\mathbb{F})$ is equicontinuous.
\end{proof}

\begin{prop}
For any weakly mixing non-autonomous system $(X,\mathbb{F})$, $Q(X,\F)=X\times X$.
\end{prop}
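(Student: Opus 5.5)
Fix a pair $(x,y)\in X\times X$, an $\epsilon>0$, and open neighborhoods $U$ of $x$ and $V$ of $y$. By the definition of regional proximality, it suffices to find $z_1\in U$, $z_2\in V$ and some $k\in\N$ with $d_X(\omega_k(z_1),\omega_k(z_2))<\epsilon$. The plan is to use weak mixing to push both neighborhoods into a common small open set at the same time $k$.

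Choose any point $p\in X$ and let $W=B(p,\epsilon/2)$, a nonempty open set. Apply the definition of weak mixing with $U_1=U$, $U_2=V$ and $V_1=V_2=W$. This gives a $k\in\N$ with $\omega_k(U)\cap W\neq\emptyset$ and $\omega_k(V)\cap W\neq\emptyset$. Pick $z_1\in U$ with $\omega_k(z_1)\in W$ and $z_2\in V$ with $\omega_k(z_2)\in W$. The triangle inequality then gives
$$d_X(\omega_k(z_1),\omega_k(z_2))<\epsilon,$$
so the set $\{n\in\N: d_X(\omega_n(z_1),\omega_n(z_2))<\epsilon\}$ is nonempty. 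Since $x,y,\epsilon,U,V$ were arbitrary, $(x,y)\in Q(X,\F)$, and hence $Q(X,\F)=X\times X$.

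There is no real obstacle here. The only point to check is that the definition of weak mixing allows the two target sets to coincide, i.e.\ $V_1=V_2$, which it does since it quantifies over arbitrary nonempty open sets. The weaker property of transitivity would not suffice, because it does not give simultaneity at a single time $k$. The argument needs no commutativity, homeomorphism, or compactness assumption.
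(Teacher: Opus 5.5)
Your proof is correct and follows the paper's argument: both apply weak mixing with $U_1=U$, $U_2=V$ and a common small target set around an arbitrary point, then conclude by the triangle inequality. The paper uses a shrinking sequence of $\frac{1}{n}$-neighborhoods instead of your single $\frac{\epsilon}{2}$-ball, which is only a cosmetic difference.
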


\begin{proof}
Let $(x_1,x_2)$ be any point in $X\times X$, $U \times V$ be a neighborhood of $(x_1,x_2)$, $x_0\in X$ and $U_n$ be $\frac{1}{n}$-neighborhood of $x_0$. As $(X,\mathbb{F})$ is weakly mixing, there exists $k_n\in \mathbb{N}$ such that $(\omega_{k_n}(U)\times \omega_{k_n}(V))\cap (U_n\times U_n) \neq \emptyset$ and thus there exists $(u_n,v_n)\in U\times V$ such that such that $d_X(\omega_{k_n}(u_n),\omega_{k_n}(v_n))<\frac{2}{n}$. As the argument holds for any $n\in\N$ and any $(x_1,x_2)\in X\times X$, we have $Q(X,\F)=X\times X$.
\end{proof}

\section{Systems Generated by Uniformly Converging Sequence}\label{uniform}

In this section, we investigate proximal relations for systems generated by a uniform convergent sequence of maps.

\begin{thm}  \label{L1}  
Let $(X,\F)$ and $(Y,\mathbb{G})$ be non-autonomous dynamical systems generated by $\F=\{f_n:n\in\N \}$ and $\mathbb{G}=\{g_n:n\in\N\}$ respectively such that $(g_n)$ converges uniformly to $g$. If $(x_1,x_2)\in L(X,\F)$ and $(y_1,y_2)\in L(X,\mathbb{G})$, then for every $\epsilon>0$, there exists a syndetic set $A$ such that $d_X(\omega_{k,X}(x_1),\omega_{k,X}(x_2))<\epsilon$ and $d_Y(\omega_{k,Y}(y_1),\omega_{k,Y}(y_2))<\epsilon$ for all $k\in A$.
\end{thm}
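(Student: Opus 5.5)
The plan is to exploit uniform convergence of $(g_n)$ to make the $\epsilon$-close instances of $(y_1,y_2)$ come in long blocks of consecutive times. Each syndetic return of the $X$-pair will then land inside one of these blocks. Some such extra structure is needed, since two syndetic sets need not meet (even and odd integers), so the two syndetic sets cannot simply be intersected.

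\textbf{Step 1 (choosing $M$ from the $X$-pair).} Fix $\epsilon>0$. Since $(x_1,x_2)\in L(X,\F)$, the set $A_X=\{k: d_X(\omega_{k,X}(x_1),\omega_{k,X}(x_2))<\epsilon\}$ is $M$-syndetic for some $M\in\N$. Fix this $M$ before doing anything on the $Y$ side.

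\textbf{Step 2 (a finite-horizon approximation lemma for $\mathbb{G}$).} I will show that for each fixed $j\in\{1,\dots,M\}$,
\[
D_{C(Y)}\bigl(g_k^{k+j},\,g^j\bigr)\to 0 \quad\text{as } k\to\infty,
\]
where $g_k^{k+j}=g_{k+j}\circ\cdots\circ g_{k+1}$. The proof is by induction on $j$. The inductive step splits
\[
d\bigl(g_{k+j+1}(g_k^{k+j}z),\,g(g^jz)\bigr)\le D(g_{k+j+1},g)+d\bigl(g(g_k^{k+j}z),\,g(g^jz)\bigr),
\]
and the last term is small uniformly in $z$ by uniform continuity of $g$ on the compact space $Y$. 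This lemma is the main technical point.

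\textbf{Step 3 (choosing $\delta$ and $K_0$).} By uniform continuity of the finitely many maps $g^1,\dots,g^M$, pick $\delta\in(0,\epsilon/3)$ such that $d_Y(u,v)<\delta$ implies $d_Y(g^ju,g^jv)<\epsilon/3$ for all $j\le M$. Then use Step 2 to pick $K_0$ such that $D(g_k^{k+j},g^j)<\epsilon/3$ for all $k\ge K_0$ and $j\le M$.

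Now let $k\ge K_0$ satisfy $d_Y(\omega_{k,Y}(y_1),\omega_{k,Y}(y_2))<\delta$. For each $j\le M$, writing $\omega_{k+j,Y}=g_k^{k+j}\circ\omega_{k,Y}$ and using the triangle inequality,
\[
d_Y\bigl(\omega_{k+j,Y}(y_1),\,\omega_{k+j,Y}(y_2)\bigr)<\epsilon .
\]
The case $j=0$ holds since $\delta<\epsilon$. So the whole block $[k,k+M]$ consists of $\epsilon$-close times for the $Y$-pair.

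\textbf{Step 4 (conclusion).} Since $(y_1,y_2)$ is syndetically proximal, the set $B_\delta=\{k\ge K_0: d_Y(\omega_{k,Y}(y_1),\omega_{k,Y}(y_2))<\delta\}$ is $N$-syndetic for some $N$. Discarding finitely many terms preserves syndeticity. For each $k\in B_\delta$, the interval $[k,k+M]$ contains some $a_k\in A_X$, because $A_X$ is $M$-syndetic. By Step 3, $a_k$ is also an $\epsilon$-close time for the $Y$-pair.

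Let $A=\{a_k: k\in B_\delta\}$. Consecutive elements of $B_\delta$ differ by at most $N$, so consecutive elements of $A$ differ by at most $N+M$. Hence $A$ is syndetic, and both distances are $<\epsilon$ for every $k\in A$, as required.
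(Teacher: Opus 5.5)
Your proof is correct and follows essentially the same route as the paper. Both fix the syndetic gap $M$ of the $X$-pair first, then use continuity of $g^1,\dots,g^M$ and uniform convergence to choose $\delta$ and a threshold, so that every $\delta$-close time $k$ of the $Y$-pair (beyond the threshold) starts a block of $M$ consecutive $\epsilon$-close times, and each such block catches an $\epsilon$-close time of the $X$-pair.

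Your additions are the inductive proof that $D_{C(Y)}(g_k^{k+j},g^j)\to 0$, which the paper asserts without proof, and the bookkeeping. On the bookkeeping, you should also take $K_0\ge \min A_X$, so that each block $[k,k+M]$ actually meets $A_X$. The paper's proof has the same small omission.
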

\begin{proof}
Let $(X,\F)$ and $(Y,\mathbb{G})$ be non-autonomous systems generated by $\F=\{f_n:n\in\N \}$ and $\mathbb{G}=\{g_n:n\in\N\}$ and let $(g_n)$ converge uniformly to $g$. Further, let $(x_1,x_2)\in L(X,\F)$, $(y_1,y_2)\in L(Y,\mathbb{G})$ and let $\epsilon>0$ be given. As $(x_1,x_2)\in L(X,\F)$, there exists a syndetic set $A_1\subseteq \N$ (say $M_1$-syndetic) such that $d_X(\omega_{k,X}(x_1),\omega_{k,X}(x_2))<\epsilon$ for all $k\in A_1$. As $g$ is continuous, there exists a $\delta>0$ such that $d_Y(x,y)<\delta\implies d_Y(g^i(x),g^i(y))<\frac{\epsilon}{3}$ for $i\in\{1,2,\ldots,M_1\}$ for all $x,y\in Y$. Also as $(g_n)$ converges uniformly to $g$, there exists $n_0\in \N$ such that $D_{C(Y)}(g_{n}^{n+i},g^i)<\frac{\epsilon}{3}$ for all $n \geq n_0$ and $i\in\{1,2,\ldots,M_1\}$. As $y_1$ and $y_2$ are syndetically proximal in $(Y,\mathbb{G})$, there exists a syndetic set $A_2\subseteq \N$ (say $M_2$-syndetic with $\min\limits_{a_2\in A_2}a_2 > n_0$) such that $d_Y(\omega_{k,Y}(y_1),\omega_{k,Y}(y_2))<\delta$ for each $k\in A_2$ and hence $d_Y(g^i(\omega_{k,Y}(y_1)),g^i(\omega_{k,Y}(y_2)))<\frac{\epsilon}{3}$ for all $i\in \{1,2,\ldots,M_1\}$ and $k\in A_2$. Thus we have $d_Y(\omega_{k+i,Y}(y_1),\omega_{k+i,Y}(y_2))\leq d_Y(\omega_{k+i,Y}(y_1),g^i(\omega_{k,Y}(y_1)))+ d_Y(g^i(\omega_{k,Y}(y_1)),g^i(\omega_{k,Y}(y_2)))+ d_Y(g^i(\omega_{k,Y}(y_2)),\omega_{k+i,Y}(y_2))<\epsilon$ for all $i\in \{1,2,\ldots,M_1\}$ and $k\in A_2$. Finally, as $d_X(\omega_{k+i,X}(x_1),\omega_{k+i,X}(x_2))<\epsilon$ for some $i\in \{1,2,\ldots M_1\}$ and $k\in A_2$, there exists a syndetic set $A\subseteq \N$ such that $d_X(\omega_{k,X}(x_1),\omega_{k,X}(x_2))<\epsilon$ and $d_Y(\omega_{k,Y}(y_1),\omega_{k,Y}(y_2))<\epsilon$ for any $k\in A$.
\end{proof}

\begin{cor}\label{lxf}
Let $(X,\F)$ and $(Y,\mathbb{G})$ be non-autonomous dynamical systems generated by $\F=\{f_n:n\in\N \}$ and $\mathbb{G}=\{g_n:n\in\N\}$ respectively. If $(g_n)$ converges uniformly to $g$, then $((x_1,x_2),(y_1,y_2))\in L(X,\F)\times L(Y,\mathbb{G}) \Leftrightarrow ((x_1,y_1),(x_2,y_2)) \in L(X\times Y, \F \times \mathbb{G})$.
\end{cor}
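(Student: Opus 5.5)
The corollary has two directions, and both come down to how the product metric relates to the metrics on the factors. I would fix a standard product metric on $X\times Y$, say $d((a,b),(c,d))=\max\{d_X(a,c),d_Y(b,d)\}$; any equivalent choice such as the sum metric only changes constants. For the product system $(X\times Y,\F\times\mathbb{G})$, the $k$-th iterate is $\omega_{k,X\times Y}(x,y)=(\omega_{k,X}(x),\omega_{k,Y}(y))$. This follows immediately from the definition of the product family $(f_n\times g_n)$.

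\emph{($\Leftarrow$)} Suppose $((x_1,y_1),(x_2,y_2))\in L(X\times Y,\F\times\mathbb{G})$ and let $\epsilon>0$. The set
\[S=\{k\in\N: d((\omega_{k,X}(x_1),\omega_{k,Y}(y_1)),(\omega_{k,X}(x_2),\omega_{k,Y}(y_2)))<\epsilon\}\]
is syndetic. Since each coordinate distance is at most the product distance, we have
\[S\subseteq\{k: d_X(\omega_{k,X}(x_1),\omega_{k,X}(x_2))<\epsilon\}.\]
A superset of a syndetic set is syndetic, so $(x_1,x_2)\in L(X,\F)$. The same argument in the second coordinate gives $(y_1,y_2)\in L(Y,\mathbb{G})$. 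This direction does not use uniform convergence.

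\emph{($\Rightarrow$)} This is exactly where Theorem \ref{L1} is used. Given $(x_1,x_2)\in L(X,\F)$, $(y_1,y_2)\in L(Y,\mathbb{G})$ and $\epsilon>0$, the theorem (which needs $(g_n)\to g$ uniformly) provides a syndetic set $A$ on which both coordinate distances are $<\epsilon$ simultaneously. With the max metric, the product distance at each time $k\in A$ is then $<\epsilon$. With the sum metric, I would apply the theorem with $\epsilon/2$ instead. Since $\epsilon$ is arbitrary, the pair $((x_1,y_1),(x_2,y_2))$ is syndetically proximal in the product.

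The only substantive point is the forward direction. There, two syndetic sets need not intersect at all, so the simultaneous syndetic set is essential; Theorem \ref{L1} supplies precisely this, and everything else is bookkeeping about the product metric.
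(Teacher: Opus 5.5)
Your proposal is correct and follows the paper's proof. The direction from the product to the factors is the coordinatewise projection, which needs no convergence hypothesis. The direction from the factors to the product is exactly the common syndetic set supplied by Theorem \ref{L1}. Your remarks on the choice of product metric and on the form of $\omega_{k,X\times Y}$ just make explicit what the paper leaves implicit.
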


\begin{proof}
Firstly note that as proximality of two tuples in $X\times Y$ ensures proximality in each of the coordinates, $((x_1,y_1),(x_2,y_2)) \in L(X\times Y, \F \times \mathbb{G})$ ensures $((x_1,x_2),(y_1,y_2))\in L(X,\F)\times L(Y,\mathbb{G})$. Conversely, let $(g_n)$ converge uniformly to $g$, $((x_1,x_2),(y_1,y_2))\in L(X,\F)\times L(Y,\mathbb{G})$ and let $\epsilon>0$ be given. Then by Theorem \ref{L1}, there exists a syndetic set $A$ such that $d_X(\omega_{k,X}(x_1),\omega_{k,X}(x_2))<\epsilon$ and $d_Y(\omega_{k,Y}(y_1),\omega_{k,Y}(y_2))<\epsilon$. Thus $L(X,\F)\times L(Y,\mathbb{G}) \subseteq L(X\times Y, \F \times \mathbb{G})$ holds and the proof is complete.
\end{proof}

\begin{cor}
Let $(X,\F)$ and $(Y,\mathbb{G})$ be non-autonomous dynamical systems generated by $\F=\{f_n:n\in\N \}$ and $\mathbb{G}=\{g_n:n\in\N\}$ respectively. If $(g_n)$ converges uniformly to $g$, then $((x_1,x_2),(y_1,y_2))\in M(X,\F)\times M(Y,\mathbb{G}) \Leftrightarrow ((x_1,y_1),(x_2,y_2)) \in M(X\times Y, \F \times \mathbb{G})$.
\end{cor}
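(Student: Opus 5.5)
The plan follows the pattern of Corollary \ref{lxf}, with Theorem \ref{L1} replaced by a ``regional'' analogue. Fix $\epsilon>0$ and a basic neighborhood $(U_1\times V_1)\times(U_2\times V_2)$ of the pair $((x_1,y_1),(x_2,y_2))$ in $(X\times Y)\times(X\times Y)$.

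\emph{The easy direction.} Assume $((x_1,y_1),(x_2,y_2))\in M(X\times Y,\F\times\mathbb{G})$, and use the max metric on $X\times Y$. For any neighborhoods $U_1,U_2$ of $x_1,x_2$, take $V_1=V_2=Y$. Membership in $M$ then produces points $(z_1,w_1)\in U_1\times Y$ and $(z_2,w_2)\in U_2\times Y$ whose $\F\times\mathbb{G}$-orbits are $\epsilon$-close along a syndetic set of times. Projecting to the $X$-coordinate shows that $(z_1,z_2)$ is $\epsilon$-close along that same syndetic set, so $(x_1,x_2)\in M(X,\F)$. The same argument in the $Y$-coordinate gives $(y_1,y_2)\in M(Y,\mathbb{G})$.

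\emph{The converse.} Assume $(x_1,x_2)\in M(X,\F)$ and $(y_1,y_2)\in M(Y,\mathbb{G})$. Regional syndetic proximality gives $z_1\in U_1$, $z_2\in U_2$ such that
\[A_1=\{k:d_X(\omega_{k,X}(z_1),\omega_{k,X}(z_2))<\epsilon\}\]
is $M_1$-syndetic. For the $Y$-coordinate we rerun the proof of Theorem \ref{L1}, with the tolerance chosen after $M_1$ is known. By continuity of $g$, choose $\delta>0$ so that $d_Y(u,v)<\delta$ implies $d_Y(g^i(u),g^i(v))<\epsilon/3$ for all $i\le M_1$. By uniform convergence, choose $n_0$ so that $D_{C(Y)}(g_n^{n+i},g^i)<\epsilon/3$ for all $n\ge n_0$ and $i\le M_1$. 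Now apply regional syndetic proximality of $(y_1,y_2)$ with tolerance $\delta$. This gives $w_1\in V_1$, $w_2\in V_2$ and a syndetic set $A_2$ on which the $\mathbb{G}$-orbits of $w_1,w_2$ are $\delta$-close; discarding finitely many elements, we may assume $A_2\subseteq[n_0,\infty)$. The three-term triangle inequality from Theorem \ref{L1} then gives
\[d_Y(\omega_{k+i,Y}(w_1),\omega_{k+i,Y}(w_2))<\epsilon \quad\text{for all } k\in A_2,\ 1\le i\le M_1.\]

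\emph{Combining the two sets.} For each $k\in A_2$, the block $\{k+1,\dots,k+M_1\}$ meets $A_1$, because $A_1$ is $M_1$-syndetic; this needs $k+1\ge\min A_1$, which again holds after discarding finitely many elements of $A_2$. Choosing one such element $a(k)\in A_1$ for each $k\in A_2$ gives a set $A=\{a(k):k\in A_2\}$. Consecutive elements of $A$ differ by at most $M_2+M_1$, so $A$ is syndetic. On $A$ both coordinates are $\epsilon$-close, so the pair of points $((z_1,w_1),(z_2,w_2))\in(U_1\times V_1)\times(U_2\times V_2)$ witnesses the required property, and $((x_1,y_1),(x_2,y_2))\in M(X\times Y,\F\times\mathbb{G})$.

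The main obstacle is the order of the choices. The witnesses $w_1,w_2$ for the $Y$-coordinate must be chosen after $M_1$, hence after $z_1,z_2$, since $\delta$ depends on $M_1$. This is legitimate because the definition of regional syndetic proximality allows $w_i$ to depend on $\delta$. The one point to verify is that nothing in Theorem \ref{L1} used the specific points $y_1,y_2$ beyond the syndetic $\delta$-closeness of their orbits, so the argument transfers verbatim to $w_1,w_2$.
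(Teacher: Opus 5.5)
Your proof is correct and takes the same route as the paper: projection to the factors for one direction, and the combining mechanism of Theorem \ref{L1} for the other. It is in fact more careful than the paper's proof. The paper applies Corollary \ref{lxf} to the witnesses as though they were syndetically proximal pairs, but the definition of $M$ only supplies witnesses that depend on $\epsilon$; choosing $w_1,w_2$ at tolerance $\delta$ after $M_1$ is fixed, as you do, is what makes that step valid.
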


\begin{proof}
Let $(X,\F)$ and $(Y,\mathbb{G})$ be non-autonomous systems as given, $((x_1,x_2),(y_1,y_2))\in M(X,\F)\times M(Y,\mathbb{G})$ and $U_i\times V_i$ be neighborhood of $(x_i,y_i)$ in $X\times Y~~(i=1,2)$. As $U_1\times U_2$ and $V_1\times V_2$ are neighborhoods of $(x_1,x_2)$ and $(y_1,y_2)$ respectively, there exists $u_i\in U_i, v_i\in V_i$ ($i=1,2$) such that $(u_1,u_2)$ and $(v_1,v_2)$ are syndetically proximal (in $(X,\F)$ and $(Y,\mathbb{G})$ respectively). Consequently, $(u_i,v_i)\in U_i\times V_i$ ($i=1,2$) are syndetically proximal (by Corollary \ref{lxf}) and thus $((x_1,y_1),(x_2,y_2)) \in M(X\times Y, \F \times \mathbb{G})$.\\

Conversely, let $((x_1,y_1),(x_2,y_2)) \in M(X\times Y, \F \times \mathbb{G})$ and $U_1\times U_2$, $V_1\times V_2$ be neighborhoods of $(x_1,x_2)$ and $(y_1,y_2)$ respectively. As $U_i\times V_i$ is a neighborhood of $(x_i,y_i)$ ~$(i=1,2)$, there exists $(u_1,v_1)$ and $(u_2,v_2)$ in $U_1\times V_1$ and  $U_2\times V_2$ respectively such that $(u_1,v_1)$ and $(u_2,v_2)$ are syndetically proximal. Thus, $(u_1,u_2)$ and $(v_1,v_2)$ are syndetically proximal in $(X,\F)$ and $(Y,\mathbb{G})$ respectively (by Corollary \ref{lxf}) and the proof of converse is complete. 
\end{proof}

\begin{Remark}\label{lp1}
The above proof relates the proximal pairs for the product of non-autonomous systems with the proximal pairs in the individual non-autonomous component systems. It may be noted that if only one of the proximal pairs $(x_1,x_2)$ (or $(y_1,y_2)$) is syndetically proximal, a similar proof guarantees that the pairs $(x_1,y_1)$ and $(x_2,y_2)$ are arbitrarily close to each other infinitely often (not necessarily in a syndetic manner) and hence are proximal in the product system. The proof uses the fact that if $(x_1,x_2)$ is a syndetically proximal pair in $X$ and $(g_n)$ converges uniformly to $g$ then the pairs $(x_1,x_2)$ and $(y_1,y_2)$ are proximal at a common set of times and hence the points $(x_1,y_1)$ and $(x_2,y_2)$ are proximal for the product system. Also, as the above proof establishes that the common instances of proximality for any two syndetically proximal pairs are also syndetic, if the pairs $(x_1,x_2)$ and $(y_1,y_2)$ are regionally syndetically proximal, the pair $((x_1,y_1),(x_2,y_2))$ is regionally syndetically proximal in the product system. Thus, we get the following results.
\end{Remark}

\begin{thm}\label{lp}
Let $(X,\F)$ and $(Y,\mathbb{G})$ be non-autonomous dynamical systems generated by $\F=\{f_n:n\in\N \}$ and $\mathbb{G}=\{g_n:n\in\N\}$ respectively. If $(g_n)$ converges uniformly to $g$, then $((x_1,x_2),(y_1,y_2))\in L(X,\F)\times P(Y,\mathbb{G}) \Rightarrow ((x_1,y_1),(x_2,y_2)) \in P(X\times Y, \F \times \mathbb{G})$.
\end{thm}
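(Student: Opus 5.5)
The plan is to adapt the argument of Theorem \ref{L1}, replacing the syndetic set of closeness times for $(y_1,y_2)$ by an infinite set. Equip $X\times Y$ with the max metric, so that a pair in the product is $\epsilon$-close exactly when both coordinate pairs are $\epsilon$-close. It then suffices to show that for every $\epsilon>0$ and every $N\in\N$ there is some $m\geq N$ with
\begin{center}
$d_X(\omega_{m,X}(x_1),\omega_{m,X}(x_2))<\epsilon$ and $d_Y(\omega_{m,Y}(y_1),\omega_{m,Y}(y_2))<\epsilon$.
\end{center}
Taking $\epsilon=1/j$ for $j\in\N$ then gives $\liminf_m$ of the product distance equal to $0$, i.e.\ $((x_1,y_1),(x_2,y_2))\in P(X\times Y,\F\times\mathbb{G})$.

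Fix $\epsilon>0$ and $N$, and proceed as follows.
\begin{enumerate}
\item Since $(x_1,x_2)\in L(X,\F)$, choose an $M$-syndetic set $A_1$ of times at which $\omega_{k,X}(x_1)$ and $\omega_{k,X}(x_2)$ are $\epsilon$-close. Then every window $\{k,k+1,\ldots,k+M\}$ with $k\geq \min A_1$ meets $A_1$.
\item By uniform continuity of the finitely many maps $g,g^2,\ldots,g^M$ on the compact space $Y$, pick $\delta>0$ with $\delta\leq\epsilon/3$ such that $d_Y(u,v)<\delta$ implies $d_Y(g^i(u),g^i(v))<\epsilon/3$ for all $0\leq i\leq M$.
\item Pick $n_0$ such that $D_{C(Y)}(g_n^{n+i},g^i)<\epsilon/3$ for all $n\geq n_0$ and $1\leq i\leq M$.
\item Since $(y_1,y_2)$ is proximal, there is $k\geq\max(n_0,N,\min A_1)$ with $d_Y(\omega_{k,Y}(y_1),\omega_{k,Y}(y_2))<\delta$.
\end{enumerate}
Using $\omega_{k+i,Y}=g_k^{k+i}\circ\omega_{k,Y}$ and the triangle inequality, exactly as in the proof of Theorem \ref{L1}, we get $d_Y(\omega_{k+i,Y}(y_1),\omega_{k+i,Y}(y_2))<\epsilon$ for every $0\leq i\leq M$. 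The window $\{k,\ldots,k+M\}$ contains some $m\in A_1$, and this $m\geq N$ is a common $\epsilon$-closeness time for both pairs.

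The one step that needs real justification is step 3: that uniform convergence $g_n\to g$ gives uniform convergence of the block compositions $g_n^{n+i}=g_{n+i}\circ\cdots\circ g_{n+1}$ to $g^i$, uniformly in $n\geq n_0$, for each fixed $i\leq M$. Theorem \ref{L1} uses this implicitly, and I would prove it by induction on $i$. Writing $g_n^{n+i+1}=g_{n+i+1}\circ g_n^{n+i}$, split
\[
d_Y\bigl(g_n^{n+i+1}(z),g^{i+1}(z)\bigr)\leq D_{C(Y)}(g_{n+i+1},g)+d_Y\bigl(g(g_n^{n+i}(z)),g(g^i(z))\bigr).
\]
The first term is small once $n$ is large, by uniform convergence. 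The second is small by uniform continuity of $g$ together with the induction hypothesis. Since only finitely many $i\leq M$ are involved, a single $n_0$ serves for all of them, which completes the argument.
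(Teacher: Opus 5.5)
Your proposal is correct and follows the paper's proof essentially step for step. Both arguments take an $M$-syndetic set of closeness times for $(x_1,x_2)$, use uniform continuity of $g,\ldots,g^M$ together with uniform closeness of the block compositions $g_n^{n+i}$ to $g^i$, and propagate a single late proximality time of $(y_1,y_2)$ across a window of length $M$ that must meet the syndetic set. Your extra details only make explicit what the paper leaves implicit: the induction showing $D_{C(Y)}(g_n^{n+i},g^i)\to 0$ for each $i\le M$, and choosing $k\ge\max(n_0,N,\min A_1)$ so that the window actually meets $A_1$ and the common closeness times are unbounded.
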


\begin{proof}
Let $(g_n)$ converge uniformly to $g$, $((x_1,x_2),(y_1,y_2))\in L(X,\F)\times P(Y,\mathbb{G})$ and let $\epsilon>0$ be given. As $(x_1,x_2)\in L(X,\F)$, there exists a syndetic set $A\subseteq \N$ (say $M$-syndetic) such that $d_X(\omega_{k,X}(x_1),\omega_{k,X}(x_2))<\epsilon$ for all $k\in A$. As $g$ is continuous, there exists a $\delta>0$ such that $d_Y(x,y)<\delta\implies d_Y(g^i(x),g^i(y))<\frac{\epsilon}{3}$ for $i\in\{1,2,\ldots,M\}$ and for all $x,y\in Y$. Also, as $(g_n)$ converges uniformly to $g$, there exists $n_0\in \N$ such that $D_{C(Y)}(g_{n}^{n+i},g^i)<\frac{\epsilon}{3}$ for all $n>n_0$ and $i\in\{1,2,\ldots,M\}$. Further, as $y_1$ and $y_2$ are proximal in $(Y,\mathbb{G})$, there exists $k_0$ ($k_0>n_0$) such that $d_Y(\omega_{k_0,Y}(y_1),\omega_{k_0,Y}(y_2))<\delta$ and hence $d_Y(g^i(\omega_{k_0,Y}(y_1)),g^i(\omega_{k_0,Y}(y_2)))<\frac{\epsilon}{3}$ for all $i\in \{1,2,\ldots,M\}$. Consequently, $d_Y(\omega_{k_0+i,Y}(y_1),\omega_{k_0+i,Y}(y_2))\leq d_Y(\omega_{k_0+i,Y}(y_1),g^i(\omega_{k_0,Y}(y_1))) + d_Y(g^i(\omega_{k_0,Y}(y_1)),g^i(\omega_{k_0,Y}(y_2))) + d_Y(g^i(\omega_{k_0,Y}(y_2)),\omega_{k_0+i,Y}(y_2))<\epsilon$ for all $i\in \{1,2,\ldots,M\}$. Thus, as $d_X(\omega_{k_0+i,X}(x_1),\omega_{k_0+i,X}(x_2))<\epsilon$ for some $i\in \{1,2,\ldots M\}$, $((x_1,y_1),(x_2,y_2))$ is proximal for $X\times Y$ and the proof is complete. 
\end{proof}



\begin{cor}
Let $(X,\F)$ be a non-autonomous system generated by a family $\F = (f_n)$, if $(f_n)$ converges uniformly (to $f$), then $L(X,\F)$ is an equivalence relation.
\end{cor}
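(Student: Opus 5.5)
Reflexivity and symmetry of $L(X,\F)$ are immediate from the definition. For reflexivity, $d_X(\omega_n(x),\omega_n(x))=0$ for all $n$, so the relevant set is all of $\N$, which is syndetic. For symmetry, the set $\{n: d_X(\omega_n(x),\omega_n(y))<\epsilon\}$ is symmetric in $x$ and $y$. The substance of the proof is therefore transitivity. We will show that if $(x,y)\in L(X,\F)$ and $(y,z)\in L(X,\F)$, then $(x,z)\in L(X,\F)$.

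To prove transitivity, fix $\epsilon>0$. Apply Theorem \ref{L1} with $(Y,\mathbb{G})=(X,\F)$. This is legitimate because $(f_n)$ converges uniformly to $f$, and the argument of Theorem \ref{L1} only uses uniform convergence of the second family. Take the pairs $(x_1,x_2)=(x,y)$ and $(y_1,y_2)=(y,z)$ and the tolerance $\epsilon/2$. The theorem yields a syndetic set $A\subseteq\N$ with
$$d_X(\omega_k(x),\omega_k(y))<\tfrac{\epsilon}{2}\quad\text{and}\quad d_X(\omega_k(y),\omega_k(z))<\tfrac{\epsilon}{2}\qquad\text{for all } k\in A.$$
The triangle inequality then gives $d_X(\omega_k(x),\omega_k(z))<\epsilon$ for every $k\in A$. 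So the set $\{n: d_X(\omega_n(x),\omega_n(z))<\epsilon\}$ contains a syndetic set and is itself syndetic. Since $\epsilon$ was arbitrary, $(x,z)\in L(X,\F)$.

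The only real obstacle is that two syndetic sets need not intersect syndetically, or even intersect at all. The step requiring care is therefore the use of uniform convergence to upgrade one syndetic set to a set of common times, and this is exactly what Theorem \ref{L1} provides. I will remark that in this application $Y=X$ and $\mathbb{G}=\F$ (the statement of Theorem \ref{L1} writes $L(X,\mathbb{G})$, which is consistent with this choice). I will also briefly recall the mechanism, in case a reader wants it self-contained. Let $A_1$ be the $M_1$-syndetic set of times at which $(x,y)$ is $\epsilon/2$-close. Uniform continuity of $f^1,\dots,f^{M_1}$ together with uniform convergence $D_{C(X)}(f_n^{n+i},f^i)\to 0$ shows the following: whenever $(y,z)$ is $\delta$-close at a time $k>n_0$, it stays $\epsilon/2$-close on the whole window $k+1,\dots,k+M_1$. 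That window meets $A_1$, which produces the common syndetic set.
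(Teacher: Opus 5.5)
Your proposal is correct and follows the paper's proof. Reflexivity and symmetry are immediate, and for transitivity you apply Theorem~\ref{L1} with $(Y,\mathbb{G})=(X,\F)$ to get a common syndetic set on which both $(x,y)$ and $(y,z)$ are $\epsilon/2$-close, then conclude with the triangle inequality.
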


\begin{proof}
It may be noted that as proximality is reflexive and symmetric, it is sufficient to establish transitivity of $L(X,\mathbb{F})$ to establish the stated result. Also, if $(x,y)$ and $(y,z)$ are syndetically proximal pairs for $(X,\mathbb{F})$, proof of Theorem \ref{L1} establishes syndetic proximality of both the pairs under a common subsequence of $\mathbb{N}$ (say $(n_k)$). Consequently, for any $\epsilon>0$, there exists a syndetic set $A$ such that  $d_X(\omega_{n}(x),\omega_{n}(y)) < \frac{\epsilon}{2}$ and $d_X(\omega_{n}(y),\omega_{n}(z))<\frac{\epsilon}{2}$ hold for any $n\in A$. Thus, $d_X(\omega_{n}(x),\omega_{n}(z))<d_X(\omega_{n}(x),\omega_{n}(y))+ d_X(\omega_{n}(y),\omega_{n}(z))<\epsilon$ for all $n\in A$ and the pair $(x,z)$ is syndetically proximal. Consequently, $L(X,\mathbb{F})$ is a transitive relation and the proof is complete.
\end{proof}

\begin{Remark}
The above result establishes that if generating sequence for a non-autonomous system is uniformly convergent then the relation of syndetic proximality is an equivalence relation. As the result holds in the autonomous case, the above result is an analogous extension of a result known for the autonomous systems. However, it may be noted that uniform convergence plays an important role in establishing the above result and the result does not hold in the absence of uniform convergence. We now give an example in support of our claim.
\end{Remark}

\begin{ex}\label{sp}
Let $(X,f)=(\{0,1\}^{\N},\sigma)$ be the full shift and $f_n:X\rightarrow X$ be defined as $f_{2k}=f_{2k-1}=\sigma^{2^k}$ for $k\in N$. Then, for $x=0^\infty$, $y=1^20^21^40^4\ldots$, and $z=1^\infty$, as the pairs $(x,y)$ and $(y,z)$ approach arbitrarily close to each other along odd and even instances respectively, the pairs $(x,y)$ and $(y,z)$ are syndetically proximal. However, as the pair $(x,z)$ is not proximal, the result fails to hold in the absence of uniform convergence.
\end{ex}

\begin{cor}
Let $(X,\F)$ be a non-autonomous system generated by a family $\F = (f_n)$, if $(f_n)$ converges uniformly (to $f$), then $LP(X,\F)=PL(X,\F)=P(X,\F)$.
\end{cor}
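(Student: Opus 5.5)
We prove the two inclusions $P(X,\F)\subseteq LP(X,\F)\cap PL(X,\F)$ and $LP(X,\F)\cup PL(X,\F)\subseteq P(X,\F)$ separately.

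\emph{The easy inclusion.} Since $L(X,\F)$ contains the diagonal $\triangle$, every proximal pair $(x,y)$ can be written as $(x,x)\in L$ followed by $(x,y)\in P$. Hence $(x,y)\in LP(X,\F)$. In the same way, $(x,y)\in P$ followed by $(y,y)\in L$ gives $(x,y)\in PL(X,\F)$. So $P(X,\F)\subseteq LP(X,\F)\cap PL(X,\F)$.

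\emph{The inclusion $LP(X,\F)\subseteq P(X,\F)$.} Let $(x,y)\in LP(X,\F)$. Then there is $z\in X$ with $(x,z)\in L(X,\F)$ and $(z,y)\in P(X,\F)$. Apply Theorem \ref{lp} with $(Y,\mathbb{G})=(X,\F)$, which is allowed because $(f_n)$ converges uniformly to $f$. Take the syndetically proximal pair to be $(x_1,x_2)=(x,z)$ and the proximal pair to be $(y_1,y_2)=(z,y)$. Theorem \ref{lp} then gives that $((x,z),(z,y))$ is proximal for the product system $(X\times X,\F\times\F)$. This means that for every $\epsilon>0$ there are infinitely many $k$ (at least one, and arbitrarily large) with
\[
d_X(\omega_k(x),\omega_k(z))<\epsilon/2 \quad\text{and}\quad d_X(\omega_k(z),\omega_k(y))<\epsilon/2
\]
at the same time $k$. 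The triangle inequality then gives $d_X(\omega_k(x),\omega_k(y))<\epsilon$ for these $k$, so $\liminf_k d_X(\omega_k(x),\omega_k(y))=0$ and $(x,y)\in P(X,\F)$.

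To make the common instance explicit, follow the proof of Theorem \ref{lp} directly:
\begin{enumerate}
\item Let $A$ be $M$-syndetic with $d_X(\omega_k(x),\omega_k(z))<\epsilon/2$ for all $k\in A$.
\item Choose $\delta>0$ from the uniform continuity of $f,f^2,\ldots,f^M$.
\item Choose $n_0$ so that $D_{C(X)}(f_n^{n+i},f^i)<\epsilon/6$ for all $n>n_0$ and $i\leq M$.
\item Pick $k_0>n_0$ with $d_X(\omega_{k_0}(z),\omega_{k_0}(y))<\delta$.
\item Conclude that the $(z,y)$-closeness, below $\epsilon/2$, persists on the window $k_0+1,\ldots,k_0+M$. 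Since $A$ is $M$-syndetic, this window contains an element of $A$.
\end{enumerate}
Because $k_0$ can be taken arbitrarily large, this produces infinitely many such times.

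\emph{The inclusion $PL(X,\F)\subseteq P(X,\F)$.} This is symmetric. If $(x,z)\in P$ and $(z,y)\in L$, apply Theorem \ref{lp} with the $L$-pair $(z,y)$ in the first coordinate and the $P$-pair $(x,z)$ in the second, then finish with the triangle inequality as before.

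The main obstacle is making sure the times at which the two pairs are close coincide, rather than merely having each pair close infinitely often; for general sequences this fails, as Example \ref{sp} shows. That coincidence is exactly what Theorem \ref{lp} provides under uniform convergence. The only check needed is that its hypothesis of uniform convergence of the second family is met, and it is, since both families are $\F$.
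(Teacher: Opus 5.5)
Your proposal is correct and follows essentially the same route as the paper. You get $P\subseteq LP\cap PL$ from the diagonal, and you get the reverse inclusions by applying Theorem \ref{lp} (or its proof) with both systems equal to $(X,\F)$, which yields common times of closeness for the two pairs, and then using the triangle inequality.
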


\begin{proof}
Firstly note that any proximal pair $(x,y)$ can be viewed as $(x,x)(x,y)\in LP(X,\F)$ and $(x,y)(y,y)\in PL(X,\F)$, the relations  $P(X,\F)\subseteq LP(X,\F)$ and $P(X,\F)\subseteq PL(X,\F)$ hold trivially. Also, if $(x,y)\in L(X,\F)$ and $(y,z)\in P(X,\F)$, then the proof of Theorem \ref{lp} establishes proximality of $(x,y)$ and $(y,z)$ along a common subsequence and thus confirms proximality of $(x,z)$ (in $(X,\F)$). Thus $LP(X,\F)\subseteq P(X,\F)$ holds. Further, as a similar argument establishes that $(x,y)\in P(X,\F)$, $(y,z)\in L(X,\F)$ ensures proximality of the pair $(x,z)$, $LP(X,\F)=PL(X,\F)=P(X,\F)$ holds.
 \end{proof}

\begin{prop}
Let $(X,\F)$ be a non-autonomous system generated by a family $\F=(f_n)$, if $(f_n)$ converges uniformly to $f$ then $(x,y)\in L(X,\F)$ $\implies$ for every $\epsilon>0$, the set $\{n\in \N: d_X(\omega_n(x),\omega_n(y))<\epsilon\}$ is thickly syndetic.
\end{prop}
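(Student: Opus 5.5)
We plan to adapt the argument of Theorem \ref{L1}: syndetic proximality gives closeness at a syndetic set of times, and uniform convergence of $(f_n)$ lets us propagate closeness forward for a fixed number of steps.

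Fix $\epsilon>0$ and $k\in\N$. By continuity of $f$, and using uniform continuity since $X$ is compact, we choose $\delta>0$ (with $\delta<\epsilon$) such that $d_X(u,v)<\delta$ implies $d_X(f^i(u),f^i(v))<\frac{\epsilon}{3}$ for all $i\in\{0,1,\ldots,k\}$. Because $(f_n)$ converges uniformly to $f$, and the maps $f_n^{n+i}$ are compositions of at most $k$ maps each uniformly close to $f$ (with $f$ uniformly continuous), there exists $n_0$ such that $D_{C(X)}(f_n^{n+i},f^i)<\frac{\epsilon}{3}$ for all $n\geq n_0$ and $i\leq k$. This is the same fact used in the proofs of Theorem \ref{L1} and Theorem \ref{lp}.

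Next, since $(x,y)\in L(X,\F)$, the set $A_\delta=\{n\in\N: d_X(\omega_n(x),\omega_n(y))<\delta\}$ is syndetic, say $M$-syndetic. Take any $m\in A_\delta$ with $m\geq n_0$. For each $i\in\{0,\ldots,k\}$ we have $\omega_{m+i}=f_m^{m+i}\circ\omega_m$, so the triangle inequality gives
$$d_X(\omega_{m+i}(x),\omega_{m+i}(y)) \leq d_X(f_m^{m+i}(\omega_m x),f^i(\omega_m x))+d_X(f^i(\omega_m x),f^i(\omega_m y))+d_X(f^i(\omega_m y),f_m^{m+i}(\omega_m y))<\epsilon.$$
Hence the whole block $\{m,m+1,\ldots,m+k\}$ lies in $E_\epsilon=\{n: d_X(\omega_n(x),\omega_n(y))<\epsilon\}$.

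Consequently, the set $\{t: \{t+1,\ldots,t+k\}\subset E_\epsilon\}$ contains $\{m-1: m\in A_\delta,\ m\geq n_0\}$. Removing finitely many elements from a syndetic set leaves it syndetic (with the same gap bound $M$, after an initial segment). This is the required syndeticity for every $k$, so $E_\epsilon$ is thickly syndetic.

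The main obstacle is the uniform estimate $D_{C(X)}(f_n^{n+i},f^i)\to 0$ for each fixed $i\leq k$. We expect to prove it by induction on $i$: write
$$f_n^{n+i+1}-f^{i+1} = \bigl(f_{n+i+1}\circ f_n^{n+i}-f\circ f_n^{n+i}\bigr)+\bigl(f\circ f_n^{n+i}-f\circ f^i\bigr),$$
and bound the first term by $D_{C(X)}(f_{n+i+1},f)$ and the second by uniform continuity of $f$. The remaining steps are routine.
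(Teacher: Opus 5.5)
Your proposal is correct and follows essentially the same route as the paper. Both arguments choose $\delta$ and a threshold $N$ so that $\delta$-closeness at a time $l>N$ is preserved within $\epsilon$ under $f_l^{l+i}$ for $i\le k$, and then apply this at every time of the syndetic set where the orbits are $\delta$-close. The difference is that you justify this choice explicitly, via the triangle inequality against $f^i$ and the induction showing $D_{C(X)}(f_n^{n+i},f^i)\to 0$, whereas the paper simply asserts it.
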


\begin{proof}
Let $(x,y)\in L(X,\F)$, $k\in\N$ and $\epsilon>0$ be given. Since $f_n$ converges uniformly to $f$, there exists $\delta>0$ and $N\in\N$ such that $d_X(x,y)<\delta\implies d_X(f_n^{n+i}(x),f_n^{n+i}(y))<\epsilon$ for all $n>N$ and $i\in\{1,2,\ldots,k\}$. As $(x,y)\in L(X,\F)$, there exists syndetic set $A\subseteq \N$ (with $min(A)>N$) such that $d_X(\omega_l(x),\omega_l(y))<\delta$ for all $l\in A$. Consequently, we have $d_X(\omega_{l+i}(x),\omega_{l+i}(y))<\epsilon$ whenever $i\in\{1,2,\ldots,k\}$, $l\in A$ and hence the set $\{n\in \N: d_X(\omega_n(x),\omega_n(y))<\epsilon\}$ is thickly syndetic. 
\end{proof}

\section{Systems Generated by Strongly Converging Sequence}\label{strongc}

In this section, we investigate proximal relations for systems generated by a strongly convergent sequence of maps.
\begin{thm}\label{L}
Let $(X, \F)$ be a non-autonomous system generated by a sequence $\F$ commuting with $f$. If $(f_n)$ converges strongly to $f$ then $(x,y)$ is syndetically proximal for $(X,f)$ $\implies$ $(x,y)$ is syndetically proximal for $(X,\F)$. Further, if $(X,\F)$ is pointwise asymptotically commutative and $\F$ is a family of homeomorphisms, then the converse is also true. 
\end{thm}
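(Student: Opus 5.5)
Throughout, $\omega_n=f_n\circ\cdots\circ f_1$, and "$\F$ commuting with $f$" means $f_n\circ f=f\circ f_n$ for every $n$. The plan is to compare $\omega_n$ with $f^n$ uniformly. Since $f$ is uniformly continuous on the compact space $X$, a telescoping estimate will give
\[
d_X\bigl(\omega_n(z),f^n(z)\bigr)\le \sum_{j=1}^{n} d_X\bigl(f^{\,n-j}\circ f_j\circ\omega_{j-1}(z),\ f^{\,n-j+1}\circ\omega_{j-1}(z)\bigr).
\]
Each summand has the form $d_X(f^{n-j}(f_j(w)),f^{n-j}(f(w)))$. The arguments $f_j(w)$ and $f(w)$ are within $D_{C(X)}(f_j,f)$ of each other, but $f^{n-j}$ can expand distances. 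So the bound cannot be written simply as $\sum_j D_{C(X)}(f_j,f)$.

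This is where commutativity and strong convergence have to enter, and it is the main obstacle. What I would try first is the weaker, local-in-time statement, which is all that syndetic proximality needs. Fix $\epsilon>0$ and a window length $M$. By strong convergence, choose $N$ with $\sum_{j>N}D_{C(X)}(f_j,f)$ as small as required. By uniform continuity of $f,f^2,\dots,f^M$, this makes $D_{C(X)}(f_n^{n+i},f^i)<\epsilon/3$ for $n>N$ and $1\le i\le M$. This is exactly the estimate used in Theorem \ref{L1}, and it needs only uniform convergence.

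To pass from $f^n$ to $\omega_n$ at the starting time, I would use commutativity. Because each $f_j$ commutes with $f$, we have $\omega_n\circ f^m=f^m\circ\omega_n$. Hence if $(x,y)$ is syndetically proximal for $f$, then so is $(\omega_N(x),\omega_N(y))$: for $f$-dynamics, syndetic proximality is preserved under any continuous map commuting with $f$, since $f^m(\omega_N x)=\omega_N(f^m x)$ and $\omega_N$ is uniformly continuous. Now run the tail system $f_{N+1},f_{N+2},\dots$ from the points $\omega_N(x),\omega_N(y)$ and compare it to $f$. Here strong convergence is what I expect to give a bound that is uniform in $n$ and not just over windows of length $M$. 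The relevant quantity is $\sum_{j>N}$ of the errors, once they are pushed through iterates that are controlled on the set of proximality times.

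Concretely, the plan is to show that the set $\{n: d_X(f^n x,f^n y)<\delta\}$ is syndetic. Then the proposition on thickly syndetic sets, applied to the autonomous map $f$, makes it thickly syndetic. Choosing windows of length $M$ with starting times in a syndetic set, and combining with the uniform window estimate, gives syndetic closeness of $\omega_n(x),\omega_n(y)$. If making this rigorous fails, I would fall back on an argument in the style of Theorem \ref{L1}, with $f$ and the tail of $\F$ playing the roles of the two systems.

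For the converse, assume $\F$ consists of homeomorphisms and the system is pointwise asymptotically commutative. Then $f$ is a uniform limit of homeomorphisms, and I would run the same window argument with the roles of $\F$ and $f$ exchanged. The syndetic set for $(X,\F)$, combined with $D_{C(X)}(f_n^{n+i},f^i)\to0$, yields for $n$ large that $d_X(f^i\omega_n x,f^i\omega_n y)$ is small along windows. Pointwise asymptotic commutativity, $\omega_n\circ\omega_r\approx\omega_r\circ\omega_n$, together with invertibility (using $\omega_{-n}$), is then used to transport proximality of $(\omega_n x,\omega_n y)$ under $f$ back to proximality of $(f^n x,f^n y)$. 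Theorem \ref{inv}(1) guarantees that $L(X,\F)$ is invariant, so the pairs $(\omega_r x,\omega_r y)$ remain syndetically proximal, and this is what lets the transport be carried out.
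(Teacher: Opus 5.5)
There is a genuine gap, and it is at the point you yourself flag as the main obstacle. You claim the telescoping bound cannot be reduced to $\sum_j D_{C(X)}(f_j,f)$ because $f^{n-j}$ may expand distances. The hypothesis that every $f_n$ commutes with $f$ removes this in one line. The map $f^{n-j}$ commutes with $f_j$, with $f$ and with $\omega_{j-1}$, so
\[
d_X\bigl(f^{n-j}\circ f_j\circ\omega_{j-1}(z),\,f^{n-j}\circ f\circ\omega_{j-1}(z)\bigr)=d_X\bigl(f_j(w),f(w)\bigr)\le D_{C(X)}(f_j,f),\qquad w=\omega_{j-1}\bigl(f^{n-j}(z)\bigr).
\]
Running the same telescoping from time $r$ gives $D_{C(X)}(\omega_{r+k},f^k\circ\omega_r)\le\sum_{j>r}D_{C(X)}(f_j,f)$, uniformly in $k$. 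This is the estimate the paper's proof rests on (it states it without derivation). With it the forward direction is immediate. Choose $r$ with tail sum $<\epsilon/3$, take $\delta$ from the uniform continuity of $\omega_r$, and on the syndetic set where $d_X(f^kx,f^ky)<\delta$ use $f^k\circ\omega_r=\omega_r\circ f^k$ and the triangle inequality to get $d_X(\omega_{r+k}(x),\omega_{r+k}(y))<\epsilon$.

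Without this estimate your window argument cannot close. The bound $D_{C(X)}(f_n^{n+i},f^i)<\epsilon/3$ for $i\le M$ only propagates closeness of $(\omega_n(x),\omega_n(y))$ that you already have at the start of a window. Thick syndeticity of $\{n: d_X(f^nx,f^ny)<\delta\}$ says nothing about $(\omega_n(x),\omega_n(y))$ at large $n$ unless $\omega_n$ is compared with an $f$-iterate over arbitrarily long spans. Your fallback via Theorem~\ref{L1} is circular, since that theorem assumes the pair is already syndetically proximal in both systems.

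The converse needs the same estimate plus a second ingredient. You lean on the forward invariance of Theorem~\ref{inv}(1), but the argument needs invariance of $L(X,\F)$ under $\omega_m^{-1}$. The steps are:
\begin{enumerate}
\item Given $\epsilon$, fix $m$ with $\sum_{j>m}D_{C(X)}(f_j,f)<\epsilon/3$.
\item Pointwise asymptotic commutativity at the points $\omega_m^{-1}(x),\omega_m^{-1}(y)$, together with uniform continuity of the homeomorphism $\omega_m^{-1}$, shows $(\omega_m^{-1}(x),\omega_m^{-1}(y))\in L(X,\F)$.
\item Since $\omega_{m+k}\circ\omega_m^{-1}=f_m^{m+k}$ lies within $\epsilon/3$ of $f^k$ for every $k$, you get $d_X(f^kx,f^ky)<\epsilon$ on a syndetic set.
\end{enumerate}
The order matters. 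If you instead compare with $f$ at level $r$ first and only then pull back through $\omega_r^{-1}$, the fixed error $2\sum_{j>r}D_{C(X)}(f_j,f)$ must beat the modulus of continuity of $\omega_r^{-1}$, which depends on $r$. That is circular. Your sketch, transporting proximality of $(\omega_nx,\omega_ny)$ back to $(f^nx,f^ny)$ using forward invariance, does not supply this step.
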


\begin{proof}
Let $(x,y)$ be syndetically proximal for $(X,f)$ and $\epsilon>0$ be given. As $\sum \limits_{i=1}^\infty D_{C(X)}(f_n, f)<\infty$, there exists $r\in \N$ such that $\sum \limits_{n=r}^\infty D_{C(X)}(f_n, f)<\frac{\epsilon}{3}$. Also, as $\omega_r$ is continuous, there exists $\delta>0$ such that $d_X(u,v)<\delta$ implies $d_X(\omega_r(u),\omega_r(v))<\frac{\epsilon}{3}$. Further, as the pair $(x,y)$ is syndetically proximal for $(X,f)$, there exists a syndetic set $A\subseteq \N$ (say M-syndetic) such that $d_X(f^{k}(x),f^{k}(y))<\delta$ for any $k\in A$ and thus $d_X(\omega_r(f^{k}(x)),\omega_r(f^{k}(y)))<\frac{\epsilon}{3}$. As $D_{C(X)}(\omega_{r+k},f^{k}\circ\omega_r)\leq \sum \limits_{j=1}^{k} D_{C(X)}(f_{r+j}, f)<\frac{\epsilon}{3}$, we have $d_X(\omega_{r+k}(x),\omega_{r+k}(y))<\epsilon$ for any $k\in A$ (by triangle inequality) and hence the pair $(x,y)$ is syndetically proximal for $(X,\F)$.\\

Conversely, let $(x,y)$ be syndetically proximal for $(X,\F)$ and $\epsilon>0$ be given. Since $\sum \limits_{i=1}^\infty D_{C(X)}(f_n, f)<\infty$, there exists $n_0\in \N$ such that $D_{C(X)}(f_n^{n+k},f^k)<\frac{\epsilon}{3}$ for all $k\in \N$, $n\geq n_0$. Also as $(X,\F)$ is asymptotically commutative and $\F$ is family of homeomorphisms, $(\omega_{n_0}^{-1}(x), \omega_{n_0}^{-1}(y))$ is syndetically proximal for $(X,\F)$ and thus there exists a syndetic set $A\subseteq\N$ (say $M$-syndetic and $\min \limits_{a\in A} a \geq n_0$) such that $d_X(\omega_{n_0+k}(\omega_{n_0}^{-1}(x)),\omega_{n_0+k}(\omega_{n_0}^{-1}(y)))<\frac{\epsilon}{3}$ for any $k\in A$. Thus we have $d_X(f^{k}(x),f^{k}(y))<\epsilon$ for any $k\in A$ and the proof is complete.
\end{proof}

\begin{Remark}\label{r3}
The above result relates the syndetic proximality of the non-autonomous system with the syndetic proximality of the limiting system. As the result uses the instances of proximality of one system to generate instances of proximality for the other, a similar proof establishes the equivalence of proximality for two systems under an identical set of assumptions. Further, as the system establishes equivalence of proximality of a pair in the two systems, a similar proof establishes equivalence of regional proximality under the same set of assumptions. As an autonomous system is proximal if and only if it is syndetically proximal, a non-autonomous system is proximal if and only if it is syndetically proximal (under the above stated assumptions). We now establish our claims below.
\end{Remark}

\begin{ex}\label{ex3}
Let $X=[0,1]$ be the unit interval and let $f_n:X\rightarrow X$ be defined as 
$$f_1(x)=f(x)= \left\{
		\begin{array}{lr}
			x &  : 0\leq x \leq \frac{1}{2}, \\
			\frac{4}{3}x-\frac{1}{6} & : \frac{1}{2}\leq x\leq  \frac{7}{8},\\
             1 & : \frac{7}{8}\leq x\leq 1
            
		\end{array}
		\right\},$$ and for $n>1$, $$f_n(x)=g(x) = \left\{
		\begin{array}{lr}
			-2x+\frac{1}{2} &  : 0\leq x \leq \frac{1}{4}, \\
			2x-\frac{1}{2} & : \frac{1}{4}\leq x\leq \frac{1}{2},\\
             x & : \frac{1}{2}\leq x\leq 1 
            
		\end{array}
		\right\}.$$

Let $(X, \F)$ be the non-autonomous system generated by $F = \{f, g, g, \ldots\}$. Then, $f$ and $g$ commute with each other and $\sum \limits_{i=1}^\infty D_{C(X)}(f_n, g)<\infty$ holds. However, as any pair of points $(x,y)$ ($x,y \in (\frac{7}{8},1))$ is syndetically proximal for $(X,\F)$ but fails to be syndetically proximal for $(X, g)$, the Theorem \ref{L} fails to hold if the members of generating sequence fail to be homeomorphisms.
\end{ex}

\begin{cor}\label{RP}
Let $(X, \F)$ be a non-autonomous system generated by a family $\F$ commuting with $f$. If $(f_n)$ converges strongly to $f$ then, $(x, y)$ is regionally proximal for $(X, f) \implies (x, y)$ is regionally proximal for $(X, \F)$. Further, if $(X,\F)$ is asymptotically commutative and $\F$ is a family of homeomorphisms, the converse is also true.
\end{cor}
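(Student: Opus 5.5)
We will follow the proof of Theorem \ref{L}, replacing the single pair $(x,y)$ by approximating pairs taken from the given neighbourhoods. We use the sequential form of regional proximality already used in the proof of Theorem \ref{inv}(2). A pair $(x,y)$ is regionally proximal exactly when there are sequences $x_n\to x$, $y_n\to y$ and times $k_n$ with $d_X(\omega_{k_n}(x_n),\omega_{k_n}(y_n))\to 0$.

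\emph{Forward direction.} Let $(x,y)\in Q(X,f)$, and let neighbourhoods $U,V$ of $x,y$ and $\epsilon>0$ be given. By strong convergence, choose $r$ with $\sum_{n>r}D_{C(X)}(f_n,f)<\epsilon/3$. Using commutativity of $\F$ with $f$, we get $\omega_{r+k}$ close to $f^k\circ\omega_r$ uniformly in $k$: writing $\omega_{r+k}=f_{r+k}\circ\cdots\circ f_{r+1}\circ\omega_r$ and swapping one $f_{r+j}$ for $f$ at a time gives $D_{C(X)}(\omega_{r+k},f^k\circ\omega_r)\le\sum_{j=1}^k D_{C(X)}(f_{r+j},f)<\epsilon/3$. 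Then let $\delta$ be a uniform continuity modulus of $\omega_r$ for $\epsilon/3$. Since $f$ commutes with each $f_n$, it commutes with $\omega_r$, so $f^k\circ\omega_r=\omega_r\circ f^k$. Regional proximality in $(X,f)$ gives $z_1\in U$, $z_2\in V$ and $k$ with $d_X(f^k z_1,f^k z_2)<\delta$. Hence $d_X(\omega_r f^k z_1,\omega_r f^k z_2)<\epsilon/3$, and the triangle inequality yields $d_X(\omega_{r+k}z_1,\omega_{r+k}z_2)<\epsilon$. So $(x,y)\in Q(X,\F)$. This step is routine.

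\emph{Converse.} Assume $(X,\F)$ is asymptotically commutative and consists of homeomorphisms, and let $(x,y)\in Q(X,\F)$. Fix $\epsilon$ and choose $n_0$ with $D_{C(X)}(f_{n_0}^{n_0+k},f^k)<\epsilon/3$ for all $k$; this is possible by the tail-sum estimate for $n\ge n_0$. The plan is to pass to the pair $(\omega_{n_0}^{-1}x,\omega_{n_0}^{-1}y)$. We need this pair to be regionally proximal for $(X,\F)$. The main obstacle is invariance of $Q$ under \emph{inverses} $\omega_{n_0}^{-1}$, whereas Theorem \ref{inv}(2) only gives forward invariance. I would first redo the argument of Theorem \ref{inv}(2) with $\omega_r^{-1}$ in place of $\omega_r$. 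Uniform asymptotic commutativity $D_{C(X)}(\omega_n\omega_r,\omega_r\omega_n)\to0$ transfers to $D_{C(X)}(\omega_r^{-1}\omega_n,\omega_n\omega_r^{-1})\to0$. To see this, write $\omega_r^{-1}\omega_n-\omega_n\omega_r^{-1}$ as $\omega_r^{-1}(\omega_n\omega_r)\omega_r^{-1}$ versus $\omega_r^{-1}(\omega_r\omega_n)\omega_r^{-1}$, evaluate on $\omega_r^{-1}(\cdot)$, and use uniform continuity of $\omega_r^{-1}$ on compact $X$.

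Now take sequences $u_n\to\omega_{n_0}^{-1}x$, $v_n\to\omega_{n_0}^{-1}y$ with $d_X(\omega_{k_n}u_n,\omega_{k_n}v_n)\to0$. Equivalently, take approximants of $x,y$ and pull them back by the homeomorphism $\omega_{n_0}^{-1}$, which maps small neighbourhoods of $x,y$ into neighbourhoods of the preimages, using continuity of $\omega_{n_0}$. We may take $k_n\to\infty$ (otherwise a finite time already gives closeness; we handle it directly by continuity). The quantity $d_X(\omega_{n_0+k}\omega_{n_0}^{-1}u,\omega_{n_0+k}\omega_{n_0}^{-1}v)$ equals $d_X(f_{n_0}^{n_0+k}u,f_{n_0}^{n_0+k}v)$, and this is within $2\epsilon/3$ of $d_X(f^k u,f^k v)$. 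So closeness of the $\F$-orbits of the pulled-back approximants at times $n_0+k$ transfers to closeness of the $f$-orbits of points $u\in U$, $v\in V$. Hence $(x,y)\in Q(X,f)$.

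The delicate bookkeeping is aligning the times. Regional proximality of the pulled-back pair gives closeness at some time $m$, which we need in the form $n_0+k$. Choosing $m>n_0$ by the $k_n\to\infty$ reduction handles this.
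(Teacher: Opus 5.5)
Your proposal is correct and follows the paper's route. The paper proves this corollary by rerunning the proof of Theorem \ref{L} for regional proximality. For the forward direction that means the commutation/tail-sum estimate $D_{C(X)}(\omega_{r+k},f^k\circ\omega_r)<\epsilon/3$, and for the converse it means pulling back by $\omega_{n_0}^{-1}$ and using $D_{C(X)}(f_{n_0}^{n_0+k},f^k)<\epsilon/3$. You also prove two things the paper only asserts: that asymptotic commutativity passes to $\omega_r^{-1}$, which gives backward invariance of $Q(X,\F)$, and that one may take times $k_n\to\infty$ so that they exceed $n_0$.
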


\begin{proof}
The proof follows from discussions in Remark \ref{r3} and Theorem \ref{L}.
\end{proof}

\begin{cor}
Let $(X, \F)$ be a non-autonomous system generated by a family $\F$ commuting with $f$. If $(f_n)$ converges strongly to $f$ then, $(x, y)$ is regionally syndetically proximal for $(X, f) \implies (x, y)$ is regionally syndetically proximal for $(X, \F)$. Further, if $(X,\F)$ is asymptotically commutative and $\F$ is a family of homeomorphisms,  the converse also holds.
\end{cor}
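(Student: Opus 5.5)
The plan is to transfer witnesses through neighbourhoods, running the argument of Theorem \ref{L} with syndetic sets of times. Corollary \ref{RP} does the same thing with single instances of proximality.

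\emph{Forward direction.} Let $(x,y)$ be regionally syndetically proximal for $(X,f)$. Fix $\epsilon>0$ and neighbourhoods $U,V$ of $x,y$.
\begin{enumerate}
\item Using strong convergence, choose $r\in\N$ with $\sum_{n>r}D_{C(X)}(f_n,f)<\frac{\epsilon}{3}$. Telescoping then gives $D_{C(X)}(\omega_{r+k},f^k\circ\omega_r)<\frac{\epsilon}{3}$ for every $k\in\N$. This uses that each $f_n$ commutes with $f$, as in the proof of Theorem \ref{L}.
\item By uniform continuity of $\omega_r$ on the compact space $X$, pick $\delta>0$ such that $d_X(u,v)<\delta$ implies $d_X(\omega_r(u),\omega_r(v))<\frac{\epsilon}{3}$.
\item Apply regional syndetic proximality in $(X,f)$ with tolerance $\delta$. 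This gives $z_1\in U$, $z_2\in V$ and a syndetic set $A$ with $d_X(f^k(z_1),f^k(z_2))<\delta$ for all $k\in A$.
\item Commutativity gives $\omega_r\circ f^k=f^k\circ\omega_r$. Combining this with the triangle inequality yields $d_X(\omega_{r+k}(z_1),\omega_{r+k}(z_2))<\epsilon$ for all $k\in A$.
\item The set $r+A$ is syndetic, so the same $z_1,z_2$ witness regional syndetic proximality for $(X,\F)$.
\end{enumerate}

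\emph{Converse.} Assume in addition that $(X,\F)$ is asymptotically commutative and $\F$ consists of homeomorphisms. Let $(x,y)\in M(X,\F)$ and fix $\epsilon,U,V$.
\begin{enumerate}
\item Choose $n_0$ with $D_{C(X)}(f_{n_0}^{n_0+k},f^k)<\frac{\epsilon}{3}$ for all $k$.
\item Apply Theorem \ref{inv}(2) with the backward map $\omega_{n_0}^{-1}$ in place of $\omega_r$, so that $(\omega_{n_0}^{-1}(x),\omega_{n_0}^{-1}(y))\in M(X,\F)$. This is exactly what the proof of Theorem \ref{L} does.
\item The sets $\omega_{n_0}^{-1}(U)$ and $\omega_{n_0}^{-1}(V)$ are neighbourhoods of $\omega_{n_0}^{-1}(x),\omega_{n_0}^{-1}(y)$ because $\omega_{n_0}$ is a homeomorphism. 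They therefore contain points $w_1,w_2$ with $d_X(\omega_n(w_1),\omega_n(w_2))<\frac{\epsilon}{3}$ on a syndetic set. Discarding finitely many elements keeps the set syndetic, so we may assume this holds for $n=n_0+k$, $k\in A$, with $A$ syndetic.
\item Put $z_i=\omega_{n_0}(w_i)$, so $z_1\in U$, $z_2\in V$. Then $\omega_{n_0+k}(w_i)=f_{n_0}^{n_0+k}(z_i)$, which lies within $\frac{\epsilon}{3}$ of $f^k(z_i)$.
\item By the triangle inequality, $d_X(f^k(z_1),f^k(z_2))<\epsilon$ for all $k\in A$. So $(x,y)\in M(X,f)$.
\end{enumerate}

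\emph{Main obstacle.} The hard step is justifying invariance of $M(X,\F)$ under the backward map $\omega_{n_0}^{-1}$. Theorem \ref{inv}(2) is stated only for the forward maps $\omega_r$.

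I would first repeat the argument of Theorem \ref{inv}(2) with $\omega_{n_0}^{-1}$ in place of $\omega_r$. That argument needs two ingredients. One is uniform continuity, which $\omega_{n_0}^{-1}$ has as a continuous map on a compact space. The other is that $D_{C(X)}(\omega_n\circ\omega_{n_0}^{-1},\omega_{n_0}^{-1}\circ\omega_n)\to0$.

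The second ingredient follows from asymptotic commutativity:
\begin{enumerate}
\item Write $h=\omega_{n_0}^{-1}$, so $\omega_{n_0}\circ h=Id$.
\item Then $\omega_n\circ h=\omega_{n_0}\circ h\circ\omega_n\circ h$.
\item Asymptotic commutativity gives $D_{C(X)}(\omega_n\circ\omega_{n_0},\omega_{n_0}\circ\omega_n)\to0$. Pre- and post-composing with $h$ preserves this convergence, using uniform continuity of $h$ for the post-composition, which is the step that converts the asymptotic relation into the required estimate.
\end{enumerate}

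If this identity does not close cleanly, there is a fallback. Strong convergence together with commutation with $f$ lets one replace $\omega_n$ by $f^{n-n_0}\circ\omega_{n_0}$ up to $\frac{\epsilon}{3}$. One then argues directly with the pair $(z_1,z_2)$.
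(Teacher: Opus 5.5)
Your proposal is correct and follows the paper's route. The paper's proof simply says to rerun Theorem \ref{L} (with Remark \ref{r3}) for regionally syndetically proximal pairs, which is exactly your forward and backward transfer of witnesses, and the invariance of $M(X,\F)$ under $\omega_{n_0}^{-1}$ that you flag is a step the paper asserts without proof. Your justification closes it once you write the identities as $\omega_n\circ h=h\circ(\omega_{n_0}\circ\omega_n)\circ h$ and $h\circ\omega_n=h\circ(\omega_n\circ\omega_{n_0})\circ h$, using $h\circ\omega_{n_0}=\omega_{n_0}\circ h=\mathrm{Id}$; asymptotic commutativity with $r=n_0$ and uniform continuity of $h$ then give $D_{C(X)}(\omega_n\circ h,h\circ\omega_n)\to 0$.
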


\begin{proof}
The proof follows from discussions in Remark \ref{r3} and Theorem \ref{L}.
\end{proof}

\begin{cor}
Let $(X, \F)$ be a pointwise asymptotically commutative non-autonomous system generated by a family of homeomorphisms $\F$. If $(f_n)$ converges strongly to $f$ then, $(X,\F)$ is proximal $\iff$ $(X,\F)$ is syndetically proximal.
\end{cor}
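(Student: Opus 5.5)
The plan is to transfer the question to the limiting autonomous system $(X,f)$, where proximality and syndetic proximality are known to coincide \cite{SM}, and then transfer back. One direction needs no work: syndetic proximality trivially implies proximality, since a syndetic set is infinite. So if $(X,\F)$ is syndetically proximal, every pair is proximal and $(X,\F)$ is proximal.

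For the nontrivial direction, assume $(X,\F)$ is proximal and fix $(x,y)\in X\times X$. I will use the proximal analogue of Theorem \ref{L}, as announced in Remark \ref{r3}. Its converse part works under pointwise asymptotic commutativity and the homeomorphism assumption. It shows that proximality of $(x,y)$ for $(X,\F)$ yields proximality of $(x,y)$ for $(X,f)$. I will rerun the converse argument of Theorem \ref{L} with $\liminf$ in place of syndetic sets, in three steps.
\begin{enumerate}
\item By strong convergence, choose $n_0$ with $D_{C(X)}(f_{n_0}^{n_0+k},f^k)<\epsilon/3$ for all $k$. This uses the telescoping estimate for composites, which tacitly needs commutation with $f$. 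I will add that hypothesis, as in Theorem \ref{L}, if it is not already implied.
\item Pull back to the pair $(\omega_{n_0}^{-1}(x),\omega_{n_0}^{-1}(y))$. Its proximality follows from the assumption that $(X,\F)$ is proximal, since every pair is proximal. This step is where the hypothesis is really used, and it sidesteps invariance issues.
\item Note that $\omega_{n_0+k}\circ\omega_{n_0}^{-1}=f_{n_0}^{n_0+k}$, which is uniformly close to $f^k$. Infinitely many instances of $\epsilon/3$-closeness then give infinitely many $k$ with $d_X(f^k(x),f^k(y))<\epsilon$.
\end{enumerate}
Thus $(X,f)$ is proximal, and by \cite{SM} it is syndetically proximal.

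Finally I return to $(X,\F)$ via the forward direction of Theorem \ref{L}. Every pair is syndetically proximal for $(X,f)$, hence syndetically proximal for $(X,\F)$. That forward direction again uses that $\F$ commutes with $f$ and converges strongly. Therefore $(X,\F)$ is syndetically proximal.

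The main obstacle is bookkeeping of hypotheses. Theorem \ref{L} is stated for $\F$ commuting with $f$, while the corollary mentions only pointwise asymptotic commutativity. I expect to either read the corollary as inheriting the standing assumptions of Theorem \ref{L} and Remark \ref{r3}, or check directly that the estimate $D_{C(X)}(\omega_{r+k},f^k\circ\omega_r)\le\sum_{j>r}D_{C(X)}(f_j,f)$ survives. If the bound needs uniform continuity of the iterates $f^k$, I would fall back on citing the theorem with its commutation hypothesis made explicit.
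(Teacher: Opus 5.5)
Your proposal is correct and follows the paper's own proof: you pass to $(X,f)$ via the proximal analogue of the converse in Theorem \ref{L} (Remark \ref{r3}), invoke the autonomous equivalence of \cite{SM} there, and return to $(X,\F)$ through the forward part of Theorem \ref{L}. Your bookkeeping concern is justified, because the paper's argument also rests on Theorem \ref{L} and so tacitly needs each $f_n$ to commute with $f$, which is exactly what gives $D_{C(X)}(f_{n}^{n+k},f^k)\le\sum_{j=n+1}^{n+k}D_{C(X)}(f_j,f)$; your use of the proximality of every pair in Step 2 also neatly bypasses the invariance step that the paper's route relies on.
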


\begin{proof}
If $(X,\F)$ be proximal then Remark \ref{r3} ensures that $(X,f)$ is proximal and hence syndetically proximal (Theorem $1$, \cite{SM}). Consequently, $(X,\F)$ is syndetically proximal (by Theorem \ref{L}) and the forward part holds. As any syndetically proximal pair is proximal, $(X,\F)$ is proximal $\iff$ $(X,\F)$ is syndetically proximal.
\end{proof}

\begin{Remark}\label{e3}
The above result relates the proximality of a non-autonomous system with the syndetic proximality of the same. In particular, the result establishes that if the system is generated by a sequence converging at a sufficiently fast rate, then the system is proximal if and only if it is syndetically proximal. However, if $f$ is a homeomorphism of $X$ such that $(X,f)$ is proximal then the system generated by $\F=\{f, f^{-1}, I, f^2, f^{-2}, I, I, f^3, f^{-3}, I, I, I, f^4,f^{-4}, \ldots\}$ is proximal but not syndetically proximal. Thus, the above result does not hold if uniform convergence of the generating family is dropped.
\end{Remark}

\begin{thm}
Let $(X, \F)$ be a non-autonomous system generated by a family $\F$ commuting with $f$. If If $(f_n)$ converges strongly to $f$ then, $(X,\F)$ is regionally proximal $\iff$ $(X,f)$ is regionally proximal.
\end{thm}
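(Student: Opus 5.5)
Both directions come down to transporting \emph{instances} of near-closeness between $(X,f)$ and $(X,\F)$, using the strong convergence estimate from the proof of Theorem \ref{L}. The key point is that regional proximality does not let us fix the pair beforehand, so the perturbation has to go into the choice of nearby points. Fix $\epsilon>0$, a pair $(x,y)$ and neighbourhoods $U,V$. Strong convergence gives $r\in\N$ with $\sum_{n>r}D_{C(X)}(f_n,f)<\epsilon/3$. Because each $f_n$ commutes with $f$, the telescoping argument of Theorem \ref{L} yields
\[
D_{C(X)}(\omega_{r+k},f^k\circ\omega_r)<\epsilon/3 \quad\text{and}\quad D_{C(X)}(f_r^{r+k},f^k)<\epsilon/3 \quad\text{for all } k\in\N.
\]
The second estimate uses the same telescoping on the tail maps.

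\emph{($\Leftarrow$)} Assume $(X,f)$ is regionally proximal. The strategy is to pull $U,V$ forward by $\omega_r$ and use regional proximality of $f$ at the image points. Since the $f_n$ are surjective, $\omega_r$ is a continuous surjection of the compact space $X$. The difficulty is that $\omega_r(U)$ need not be open, so I would not work with it directly. Instead I apply $f$'s proximality to $U,V$ themselves. Take $\delta$ from uniform continuity of $\omega_r$ at scale $\epsilon/3$. Regional proximality of $(X,f)$ at $(x,y)$ gives $z_1\in U$, $z_2\in V$ and $k$ with $d_X(f^k z_1,f^k z_2)<\delta$, hence $d_X(\omega_r f^k z_1,\omega_r f^k z_2)<\epsilon/3$.

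Next I use commutativity: $f$ commutes with every $f_n$, so it commutes with $\omega_r$, and therefore $\omega_r f^k=f^k\omega_r$. The first estimate then gives
\[
d_X(\omega_{r+k}z_1,\omega_{r+k}z_2)<\epsilon .
\]
So $(x,y)\in Q(X,\F)$.

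\emph{($\Rightarrow$)} Assume $(X,\F)$ is regionally proximal. This direction is the main obstacle, because instances of closeness for $\F$ at times $n\le r$ carry no information about $f$. I plan to handle it in two ways at once. First, I will use that regional proximality for $\F$ can be upgraded to instances at arbitrarily large times. Shrinking $\epsilon$ alone does not force this, so I would use the sequential form $x_n\to x$, $y_n\to y$, $d(\omega_{k_n}x_n,\omega_{k_n}y_n)\to0$. If the $k_n$ stay bounded, then along a constant subsequence $k_n=k$ we get $\omega_k x=\omega_k y$. In that case the pair is already identified by some $\omega_k$, and since $\omega_k$ intertwines with $f^k$ up to the tail error, closeness persists at all later times, which reduces to the unbounded case. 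If $k_n\to\infty$, write $k_n=r+m_n$. Then
\[
\omega_{k_n}=f_r^{r+m_n}\circ\omega_r \approx f^{m_n}\circ\omega_r=\omega_r\circ f^{m_n},
\]
using the second estimate and commutativity. This gives $(\omega_r x_n,\omega_r y_n)$ with $d(f^{m_n}\omega_r x_n,f^{m_n}\omega_r y_n)<\epsilon$, which shows that $(\omega_r x,\omega_r y)$ lies in $Q(X,f)$ up to the chosen $\epsilon$.

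Second, to return from $(\omega_r x,\omega_r y)$ to $(x,y)$ I use that the hypothesis is global: the whole system $(X,\F)$ is regionally proximal, not just a single pair. Every pair $(x',y')$ with $\omega_r x'=\omega_r x$ and $\omega_r y'=\omega_r y$ is regionally proximal, and $\omega_r$ is surjective. Running the argument above for preimages therefore shows that every pair in $\omega_r(X)\times\omega_r(X)=X\times X$ lies in $Q(X,f)$, where I also invoke that $Q$ is closed to pass from ``$\epsilon$-approximate for each $\epsilon$'' to membership. This is exactly where surjectivity substitutes for the homeomorphism hypothesis needed in the pairwise version, Corollary \ref{RP}. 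So $(X,f)$ is regionally proximal.
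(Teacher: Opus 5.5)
Your proposal is correct and follows the paper's proof. For ($\Rightarrow$) you pull the pair back to a preimage pair under $\omega_r$ (surjectivity plus the global hypothesis), take a regional-proximality instance for $(X,\F)$ there, and push it forward with the commutation-based tail estimate; for ($\Leftarrow$) you write out the Theorem~\ref{L}-type argument that the paper invokes through Corollary~\ref{RP}. You are more careful than the paper about forcing the instance to occur after time $r$, which the paper silently assumes. However, this is easier than you claim: shrinking $\epsilon$ does suffice, by uniform continuity of the finitely many maps $f_k^{r+1}$ with $k\le r$. The appeal to closedness of $Q$ is also unnecessary, since being $\epsilon$-approximately regionally proximal for every $\epsilon$ is already the definition.
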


\begin{proof}
Let $(x,y)$ be a regionally proximal pair for $(X,\F)$ and $\epsilon>0$ be given. Let $U=S(x,\eta)$ and $V=S(y,\eta)$ be the $\eta$-neighborhoods of $x$ and $y$ respectively. As $\sum \limits_{n=1}^\infty D_{C(X)}(f_n, f)<\infty$, there exists $n_0\in\N$ such that $\sum \limits_{n=n_0}^\infty D_{C(X)}(f_n, f)<\frac{\epsilon}{3}$. As $\omega_{n_0}$ is continuous, there exists $\delta>0$ such that $d_X(a,b)<\delta \implies d_X(\omega_{n_0}(a),\omega_{n_0}(b))<\eta$. Then, for any $x_1\in \omega_{-n_0}(x)$, $y_1\in \omega_{-n_0}(y)$, as $(x_1,y_1)\in Q(X,\F)$, there exists $p\in S(x_1,\delta)$, $q\in S(y_1,\delta)$ and $i\in \N$ such that $d_X(\omega_{n_0+i}(p),\omega_{n_0+i}(q))<\frac{\epsilon}{3}$ and thus $d_X(f^i(\omega_{n_0}(p)),f^i(\omega_{n_0}(q)))<\epsilon$ (by triangle inequality). Also, as $d_X(p,x_1)<\delta$ ($d(q,y_1)<\delta$) we have $d_X(\omega_{n_0}(p),x)<\eta$ ($d_x(\omega_{n_0}(q),y)<\eta$) and thus $(\omega_{n_0}(p),\omega_{n_0}(q))\in U\times V$ such that $d_X(f^i(\omega_{n_0}(p)),f^i(\omega_{n_0}(q)))<\epsilon$. Thus $(x,y)\in Q(X,f)$ and the proof of forward part is complete.\\

As every regionally proximal pair for $(X,f)$ is regionally proximal for $(X,\F)$ (by Corollary \ref{RP}), the converse also holds.
\end{proof}

\begin{thm}
Let $(X, \F)$ be a pointwise asymptotically commutative non-autonomous system. If $(f_n)$ converges strongly to $f$ then,  $(x,y)\in L(X,\F) \implies \overline{\mathcal{O}_\F((x,y))}\subseteq L(X,\F)$. 
\end{thm}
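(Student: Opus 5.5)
The plan is to take a point of the orbit closure, write it as a limit of orbit points, and transfer the syndetic return times of a fixed pair in $L(X,\F)$ to it one window of length $M$ at a time. Orbit points themselves are handled by Theorem \ref{inv}(1): pointwise asymptotic commutativity makes $L(X,\F)$ invariant, so $(\omega_r(x),\omega_r(y))\in L(X,\F)$ for every $r$. The remaining case is a limit point $(p,q)=\lim_k(\omega_{n_k}(x),\omega_{n_k}(y))$ with $n_k\to\infty$. Fix $\epsilon>0$. The aim is to show that $\{m: d_X(\omega_m(p),\omega_m(q))<\epsilon\}$ is syndetic.

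\textbf{Step 1 (uniform tail approximation).} By strong convergence, choose $n_0$ with $\sum_{n\geq n_0}D_{C(X)}(f_n,f)<\epsilon/7$. As in the proof of Theorem \ref{L}, this gives
\[
D_{C(X)}(f_n^{n+j},f^j)<\epsilon/7 \quad \text{for all } n\geq n_0 \text{ and all } j\in\N .
\]
This bound is uniform in $j$, which is what makes the argument work. Consequently, for $m\geq n_0$ and any $z$, the point $\omega_m(z)$ lies within $\epsilon/7$ of $f^{m-n_0}(\omega_{n_0}(z))$. Likewise, $\omega_{n_k+j}(z)$ lies within $\epsilon/7$ of $f^{j}(\omega_{n_k}(z))$.

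\textbf{Step 2 (a syndetic set for a fixed pair).} By Theorem \ref{inv}(1), the pair $(x',y')=(\omega_{n_0}(x),\omega_{n_0}(y))$ lies in $L(X,\F)$. Hence there is an $M$-syndetic set $A$ with $d_X(\omega_j(x'),\omega_j(y'))<\epsilon/7$ for all $j\in A$. I will show that every window $[t,t+M]$ with $t\geq n_0$ contains some $m$ with $d_X(\omega_m(p),\omega_m(q))<\epsilon$, which proves syndeticity.

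\textbf{Step 3 (the window argument).} Fix such a window and write $j=m-n_0$, so $j$ ranges over a finite set. The chain of approximations runs as follows.
\begin{enumerate}
\item $\omega_m(p)\approx f^{j}(\omega_{n_0}(p))$, by Step 1.
\item $f^{j}(\omega_{n_0}(p))\approx f^{j}(\omega_{n_0}(\omega_{n_k}(x)))$ for large $k$. This uses continuity of $\omega_{n_0}$ and uniform continuity of the finitely many maps $f^j$ with $j\leq t+M$.
\item $f^{j}(\omega_{n_0}(\omega_{n_k}(x)))\approx f^{j}(\omega_{n_k}(\omega_{n_0}(x)))$ for large $k$. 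This uses pointwise asymptotic commutativity with $r=n_0$ at the point $x$, together with uniform continuity of the finitely many $f^j$.
\item $f^{j}(\omega_{n_k}(x'))\approx \omega_{n_k+j}(x')$, by Step 1 again, once $n_k\geq n_0$.
\end{enumerate}
The same chain applies with $q,y,y'$ in place of $p,x,x'$. Choosing $k$ large enough, and with $n_k$ chosen so that the window $n_k+[t-n_0,t-n_0+M]$ contains some element $n_k+j\in A$, the triangle inequality gives $d_X(\omega_m(p),\omega_m(q))<7\cdot\epsilon/7=\epsilon$. The hypothesis that the pair $(\omega_{n_0}(x),\omega_{n_0}(y))$ is syndetically proximal supplies the element of $A$ in every window of length $M$, independently of the shift $n_k$.

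\textbf{Main obstacle.} The delicate point is Step 3: the order of quantifiers must keep every approximation uniform in the time index. I expect this to work as follows. Only finitely many $j$ occur in a given window, so continuity of the $f^j$ and pointwise commutativity (which holds for a fixed $r=n_0$ and a fixed point) can be applied by letting $k\to\infty$ with the window fixed. All the unbounded-time comparisons are handled by the uniform estimate from strong convergence in Step 1.
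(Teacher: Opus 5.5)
Your argument is correct, and it takes a different route from the paper's. The paper passes through the limit system. For $(z_1,z_2)$ in the orbit closure but not in the orbit, it shows by contradiction that $(z_1,z_2)\in L(X,f)$: if $f^{k+i}$ kept $z_1,z_2$ more than $\epsilon$ apart for all $i\in\{1,\ldots,M\}$, this would persist on neighborhoods, hence at an orbit point $(\omega_m(x),\omega_m(y))$ with $m\ge n_0$, and the tail estimate would then give $M$ consecutive times at which $(x,y)$ is $\epsilon/3$-separated. The paper then pulls this back via $L(X,f)\subseteq L(X,\F)$ from Theorem \ref{L}. In that route, pointwise asymptotic commutativity enters only through Theorem \ref{inv}, for the orbit points themselves. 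The pull-back relies on Theorem \ref{L}, whose proof needs $\F$ to commute with $f$ (to trade $f^k\circ\omega_r$ for $\omega_r\circ f^k$), and that hypothesis is not part of this statement. You never leave $(X,\F)$. You compare $\omega_m(p)$ with $f^{j}(\omega_{n_0}(p))$, then interchange $\omega_{n_0}$ and $\omega_{n_k}$ using pointwise asymptotic commutativity with $r=n_0$. This lands you on the orbit of $(\omega_{n_0}(x),\omega_{n_0}(y))\in L(X,\F)$, whose syndetic return times then handle each window. So your proof uses exactly the stated hypotheses, and the commutativity assumption does real work in it. The paper's route, when commutation with $f$ is available, gives extra information: limit points of the orbit are already syndetically proximal for the autonomous system $(X,f)$. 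One caveat applies to both proofs. Each rests on the estimate $D_{C(X)}(f_n^{n+j},f^j)<\epsilon$ for all $n\ge n_0$ and $j\in\N$, which the paper takes as a consequence of strong convergence. It does not follow from strong convergence alone: on the circle, $f(x)=2x$ and $f_n(x)=2x+2^{-n}$ satisfy $\sum_n D_{C(X)}(f_n,f)<\infty$, yet $D_{C(X)}(f_n^{2n},f^n)=(1-4^{-n})/3$ for every $n$. On that point your argument is exactly as sound as the paper's own.
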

\begin{proof}
Firstly note that if $(x,y)\in L(X,\F)$, then $\mathcal{O}_\F((x,y))\subseteq L(X,\F)$ (Theorem \ref{inv}). Further let $(z_1,z_2)\in \overline{\mathcal{O}_\F((x,y))}\setminus\mathcal{O}_\F((x,y)) $ and $\epsilon>0$ be given. As $L(X,f)\subseteq L(X,\F)$ (Theorem \ref{L}), it is sufficient to show $(z_1,z_2)$ is syndetical proximal pair for $(X,f)$. Now as $(x,y)\in L(X,\F)$, there exists a syndetic set $A\subset\N$ (say M-syndetic) such that $d_X(\omega_k(x),\omega_k(y))<\frac{\epsilon}{3}$ for all $k\in A$. Also as $(f_n)$ converges strongly to $f$, there exists $n_0\in \N$ such that $d(f_n^{n+j},f^j)<\frac{\epsilon}{3}$ for all $n\geq n_0$ and $j\in \N$. Now suppose there exists $k\in \N$ such that $d(f^{k+i}(z_1),f^{k+i}(z_2))>\epsilon$ for all $i\in \{1,\ldots M\}$. Then there exists $U_1$ and $U_2$ (neighborhood of $z_1$ and $z_2$ respectively) such that $d(f^{k+i}(p_1),f^{k+i}(p_2))>\epsilon$ for all $p_1\in U_1$, $p_2\in U_2$ and $i\in \{1,\ldots M\}$. As $(z_1,z_2)\in \overline{\mathcal{O}_\F((x,y))}\setminus\mathcal{O}_\F((x,y))$, there exists $m\geq n_0$ such that $(\omega_m(x),\omega_m(y))\in U_1\times U_2$ and thus $d(f^{k+i}(\omega_m(x)),f^{k+i}(\omega_m(y)))>\epsilon$ for all $i\in \{1,\ldots M\}$. Now it may be noted that $d(f^{k+i}(\omega_m(x)),f^{k+i}(\omega_m(y)))<d(f^{k+i}(\omega_m(x)),f_m^{m+k+i}(\omega_m(x)))+d(f_m^{m+k+i}(\omega_m(x)),f_m^{m+k+i}(\omega_m(y)))+d(f_m^{m+k+i}(\omega_m(y)),f^{k+i}(\omega_m(y)))$. Since $d(f^{k+i}(\omega_m(x)),f_m^{m+k+i}(\omega_m(x)))<\frac{\epsilon}{3}$ and $d(f_m^{m+k+i}(\omega_m(y)),f^{k+i}(\omega_m(y)))<\frac{\epsilon}{3}$ we have $d(\omega_{m+k+i}(x),\omega_{m+k+i}(y))>\frac{\epsilon}{3}$ for all $i\in\{1,2,\ldots, M\}$ which is a contradiction. Therefore, $(z_1,z_2)$ is syndetical proximal pair for $(X,f)$ (and consequently for $(X,\F)$) and $\overline{\mathcal{O}_\F((x,y))}\subseteq L(X,\F)$ holds.
\end{proof}

\begin{thm}\label{p17}
Let $(X, \F)$ be a pointwise asymptotically commutative non-autonomous system generated by a sequence of homeomorphisms $(f_n)$ converging strongly to $f$. Then, $(x,y)\in P(X,\F)$ such that $\overline{\mathcal{O}_\F((x,y))}\subseteq P(X,\F)$ then $(x,y)\in L(X,\F)$. 
\end{thm}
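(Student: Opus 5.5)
The plan is to transfer the problem to the limiting autonomous system $(X,f)$ and argue by contradiction there, in the spirit of the proof of the preceding theorem. By Theorem \ref{L} (whose hypotheses hold: $\F$ consists of homeomorphisms, is pointwise asymptotically commutative and converges strongly to $f$; the commutation with $f$ used there I will take as implicit), syndetic proximality for $(X,\F)$ and for $(X,f)$ coincide. So it suffices to show that $(x,y)$ is syndetically proximal for $(X,f)$. By Remark \ref{r3}, proximality is also preserved in both directions under these assumptions, so every pair in $\overline{\mathcal{O}_\F((x,y))}$ is proximal for $(X,f)$ as well.

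Suppose $(x,y)\notin L(X,f)$. Then for some $\epsilon>0$ the set $\{n: d_X(f^n(x),f^n(y))<\epsilon\}$ is not syndetic, so its complement is thick. Hence there are $k_j$ with $d_X(f^{k_j+i}(x),f^{k_j+i}(y))\geq\epsilon$ for all $i\in\{0,1,\ldots,j\}$. The first goal is to convert these autonomous blocks into blocks along the $\F$-orbit. By strong convergence, choose $n_0$ with $D_{C(X)}(f_n^{n+k},f^k)<\frac{\epsilon}{3}$ for all $n\geq n_0$ and all $k$. Pointwise asymptotic commutativity together with the homeomorphism assumption lets me relate $f^{k_j}(x)$ to $\omega_{m_j}(x)$ for suitable large $m_j$. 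I would mimic the converse part of Theorem \ref{L}: pass to $\omega_{n_0}^{-1}(x),\omega_{n_0}^{-1}(y)$ and compare $\omega_{n_0+k}$ with $f^k$. The target is to show that along a sequence $m_j\to\infty$ one has
\[ d_X(\omega_{m_j+i}(x),\omega_{m_j+i}(y))\geq \tfrac{\epsilon}{3} \quad\text{for } 0\le i\le j. \]

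Next, by compactness, pass to a subsequence so that $(\omega_{m_j}(x),\omega_{m_j}(y))\to(z_1,z_2)\in\overline{\mathcal{O}_\F((x,y))}$. Fix $i$. Uniform convergence of $f_{m}^{m+i}$ to $f^i$, combined with continuity of $f^i$, gives
\[ d_X(f^i(z_1),f^i(z_2))=\lim_j d_X(\omega_{m_j+i}(x),\omega_{m_j+i}(y))\geq\tfrac{\epsilon}{3} \]
for every $i\geq 0$. So $(z_1,z_2)$ is not proximal for $(X,f)$, and hence, by Remark \ref{r3}, not proximal for $(X,\F)$. This contradicts $\overline{\mathcal{O}_\F((x,y))}\subseteq P(X,\F)$. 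Therefore $(x,y)\in L(X,f)=L(X,\F)$.

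The main obstacle is the transfer step: producing long blocks of $\epsilon$-separation along the $\F$-orbit. It is easier to avoid the transfer entirely and argue directly in $(X,\F)$. If $(x,y)\notin L(X,\F)$, the times $n$ with $d_X(\omega_n(x),\omega_n(y))\ge\epsilon$ form a thick set, which directly yields $m_j\to\infty$ with $\epsilon$-separation on $[m_j,m_j+j]$. The limit step above then goes through verbatim using only $D_{C(X)}(f_m^{m+i},f^i)\to 0$. The only delicate point left is identifying proximality of $(z_1,z_2)$ in $(X,\F)$ with proximality in $(X,f)$, which is where the homeomorphism and commutativity assumptions (via Remark \ref{r3}) are needed.
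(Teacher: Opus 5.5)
Your proof is correct, provided it is read as the direct argument in your last paragraph combined with your limit step. The opening detour is never carried out and is not needed: that was the reduction to $(x,y)\in L(X,f)$ via Theorem \ref{L}, followed by converting $\epsilon$-separated blocks of the $f$-orbit into blocks along the $\F$-orbit.

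Working directly in $(X,\F)$, the thick set of times $n$ with $d_X(\omega_n(x),\omega_n(y))\ge\epsilon$ gives blocks $[m_j,m_j+j]$ with $m_j\to\infty$. Any limit point $(z_1,z_2)$ of $(\omega_{m_j}(x),\omega_{m_j}(y))$ then satisfies $d_X(f^i(z_1),f^i(z_2))\ge\epsilon$ for all $i\ge 0$; your $\frac{\epsilon}{3}$ is only an artifact of the abandoned route. This uses only $D_{C(X)}(f_m^{m+i},f^i)\to 0$ for each fixed $i$, which already follows from uniform convergence.

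The paper runs the same compactness argument in its dual, constructive form:
\begin{enumerate}
\item it first places the whole orbit closure $\overline{\mathcal{O}_\F((x,y))}$ inside $P(X,f)$ via Remark \ref{r3};
\item it attaches to each point $z$ a time $n_z$ and a product neighbourhood on which $f^{n_z}$ brings pairs within $\frac{\epsilon}{3}$, and takes a finite subcover;
\item it uses $\omega_{k+n}\approx f^{n}\circ\omega_k$ for $k\ge n_0$ and $n\le\max_i n_{w_i}$ to obtain an explicit syndeticity bound.
\end{enumerate}

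Both proofs rest on the same two ingredients. The first is bounded-time closeness of $f_m^{m+i}$ to $f^i$. The second is the transfer $P(X,\F)\subseteq P(X,f)$ from Remark \ref{r3}, which, as you correctly flag, tacitly requires $\F$ to commute with $f$; that hypothesis is missing from the statement.

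The paper's version gives a concrete bound on the gaps. Yours needs the transfer at only one point of the orbit closure, and it makes clear that the approximation step uses nothing beyond uniform convergence.
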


\begin{proof}
Let $(X, \F)$ be a a pointwise asymptotically commutative non-autonomous system generated by a family of homeomorphisms $\F$ and $(x,y)\in P(X,\F)$ such that $\overline{\mathcal{O}_\F((x,y))}\subseteq P(X,\F)$. Then by Remark \ref{r3}; $\overline{\mathcal{O}_\F((x,y))}\subseteq P(X,f)$. Let $\epsilon>0$ be given. For any $z=(z_1,z_2)\in \overline{\mathcal{O}_\F((x,y))}$, there exists $n_z\in \N$ such that $d(f^{n_z}(z_1),f^{n_z}(z_2))<\frac{\epsilon}{3}$ and thus there exists a neighborhood $U_{z_1}\times V_{z_2}$ of $(z_1,z_2)$ such that $d(f^{n_z}(p_1),f^{n_z}(p_2))<\frac{\epsilon}{3}$ for all $(p_1,p_2)\in U_{z_1}\times V_{z_2}$. Since $\overline{\mathcal{O}_\F((x,y))}$ is compact, there exists $m\in \N$ and $\{w_i=(a_i,b_i): i\in \{1,2,\ldots,m\}\}\subseteq \overline{\mathcal{O}_\F((x,y))}$ such that $\overline{\mathcal{O}_\F((x,y))}\subseteq \bigcup\limits_{i=1}^{m} (U_{a_i}\times V_{b_i})$. Also, as $(f_n)$ converges strongly to $f$, there exists $n_0\in \N$ such that $d(f_n^{n+i},f^i)<\frac{\epsilon}{3}$ for all $n\geq n_0$ and $i\in \N$. Let $k\geq n_0$ be fixed. Then, $(\omega_{k}(x),\omega_k(y))\in U_{a_j}\times V_{b_j}$ for some $j\in \{1,\ldots m\}$ and thus $d(f^{n_{w_j}}(\omega_{k}(x)),f^{n_{w_j}}(\omega_{k}(y)))<\frac{\epsilon}{3}$. Therefore, we have $d(\omega_{k+n_{w_j}}(x),\omega_{k+n_{w_j}}(y)) \leq d(\omega_{k+n_{w_j}}(x),f^{n_{w_j}}(\omega_k(x)))+ d(f^{n_{w_j}}(\omega_k(x)),f^{n_{w_j}}(\omega_k(y)))+d(f^{n_{w_j}}(\omega_k(y)),\omega_{k+n_{w_j}}(y))<\epsilon$ and thus $(x,y)$ is syndetically proximal with syndetic bound $M=\max \{n_0, \max\limits_{1\leq i\leq m}\{n_{w_i}\}\}$. Thus, the pair $(x,y)$ is syndetically proximal for $(X,\F)$, and the proof is complete.
 \end{proof}

\begin{cor}\label{p90}
Let $(X, \F)$ be a pointwise asymptotically commutative non-autonomous system generated by a sequence of homeomorphisms $(f_n)$ converging strongly to $f$. If $P(X,\F)$ is closed, then $P(X,\F)= L(X,\F)$ and hence is an equivalence relation.
\end{cor}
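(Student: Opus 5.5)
The plan is to show that $P(X,\F)\subseteq L(X,\F)$ by combining the invariance result (Theorem \ref{inv}) with Theorem \ref{p17}. The equivalence relation statement will then follow from the uniform-convergence corollary on $L(X,\F)$, using the fact that strong convergence implies uniform convergence.

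First, I will check the trivial inclusion $L(X,\F)\subseteq P(X,\F)$. Every syndetic set is infinite, so for each $\epsilon>0$ there are infinitely many $n$ with $d_X(\omega_n(x),\omega_n(y))<\epsilon$. This gives $\liminf_n d_X(\omega_n(x),\omega_n(y))=0$.

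For the reverse inclusion, fix $(x,y)\in P(X,\F)$.

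1. Since $(X,\F)$ is pointwise asymptotically commutative, part (1) of Theorem \ref{inv} shows that $P(X,\F)$ is invariant. Hence $(\omega_r(x),\omega_r(y))\in P(X,\F)$ for all $r\in\N$, that is, $\mathcal{O}_\F((x,y))\subseteq P(X,\F)$.
2. If the orbit is taken to be the full orbit (the family consists of homeomorphisms), I also need the pairs $(\omega_{-r}(x),\omega_{-r}(y))$ to be proximal. For this I will use the converse direction available through Remark \ref{r3}. Proximality in $(X,\F)$ is equivalent to proximality in $(X,f)$ under the standing hypotheses. Alternatively, I can apply the asymptotic commutativity relation with the point $\omega_r^{-1}(x)$ and use continuity of $\omega_r^{-1}$ together with $\omega_r^{-1}\circ\omega_n\circ\omega_r\approx\omega_n$ along the proximality times, mirroring the homeomorphism argument already used in the converse of Theorem \ref{L}.
3. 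Because $P(X,\F)$ is closed by hypothesis, taking closures gives $\overline{\mathcal{O}_\F((x,y))}\subseteq \overline{P(X,\F)}=P(X,\F)$.
4. Theorem \ref{p17} now applies directly and gives $(x,y)\in L(X,\F)$.

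Hence $P(X,\F)=L(X,\F)$.

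For the final claim, a strongly convergent sequence is uniformly convergent. The earlier corollary for uniformly convergent generating sequences therefore shows that $L(X,\F)$ is an equivalence relation. Consequently $P(X,\F)$ is an equivalence relation as well.

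The main obstacle is step 2, the backward-orbit issue. Theorem \ref{inv} only gives forward invariance, while the homeomorphism convention makes $\mathcal{O}_\F$ the full orbit. I would first try to sidestep it. The proof of Theorem \ref{p17} only uses that the forward points $(\omega_k(x),\omega_k(y))$, for $k\ge n_0$, lie in a compact set contained in $P(X,f)$. So it suffices to apply that argument to the closure of the forward orbit, which lies in $P(X,\F)$ by steps 1 and 3. If that is not acceptable, I would fall back on the transfer to $(X,f)$ via Remark \ref{r3} described in step 2.
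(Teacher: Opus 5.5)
Your proposal is correct and follows the paper's own argument. Invariance of $P(X,\F)$ (Theorem \ref{inv}) together with closedness places $\overline{\mathcal{O}_\F((x,y))}$ inside $P(X,\F)$, Theorem \ref{p17} then yields $(x,y)\in L(X,\F)$, and the equivalence-relation claim follows from the uniform-convergence corollary; the paper leaves that last step implicit. You are also more careful than the paper about the full-orbit convention for homeomorphisms: both your forward-orbit sidestep and your direct argument (pointwise asymptotic commutativity at $\omega_r^{-1}(x),\omega_r^{-1}(y)$ plus uniform continuity of $\omega_r^{-1}$) work, whereas the Remark \ref{r3} fallback is less secure, since it rests on Theorem \ref{L}, which assumes $\F$ commutes with $f$.
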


\begin{proof}
The proof follows from that fact that if $(x,y)\in P(X,\F)$ and $P(X,\F)$ is closed then $\overline{\mathcal{O}_\F((x,y))}\subseteq P(X,\F)$ and hence $(x,y)\in L(X,\F)$ (by Theorem \ref{p17}). Thus, every proximal pair is syndetically proximal, and the proof is complete.
\end{proof}

\begin{cor}
Let $(X, \F)$ be a pointwise asymptotically commutative non-autonomous system generated by a commutative family of homeomorphisms $\F$. If $(f_n)$ converges strongly to $f$ then, $Q(X,\F)\subseteq P(X,\F)$ $\iff$ $L(X,\F)=M(X,\F)=Q(X,\F)=P(X,\F)$.
\end{cor}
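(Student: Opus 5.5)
The backward direction is immediate. If $L(X,\F)=M(X,\F)=Q(X,\F)=P(X,\F)$, then in particular $Q(X,\F)=P(X,\F)$, so $Q(X,\F)\subseteq P(X,\F)$. The work is in the forward direction. I will first record the chain of inclusions that holds under the hypotheses. A syndetically proximal pair is proximal, so $L(X,\F)\subseteq P(X,\F)$. A proximal pair is regionally proximal (take $z_1=x$, $z_2=y$), so $P(X,\F)\subseteq Q(X,\F)$. A syndetically proximal pair is regionally syndetically proximal, so $L(X,\F)\subseteq M(X,\F)$. Finally a regionally syndetically proximal pair is regionally proximal, since a syndetic set is nonempty, so $M(X,\F)\subseteq Q(X,\F)$. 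Thus
$$L(X,\F)\subseteq P(X,\F)\subseteq Q(X,\F),\qquad L(X,\F)\subseteq M(X,\F)\subseteq Q(X,\F).$$

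Now assume $Q(X,\F)\subseteq P(X,\F)$. Together with the chain above this gives $P(X,\F)=Q(X,\F)$. By the earlier Proposition, $Q(X,\F)$ is a closed relation in $X\times X$, so $P(X,\F)$ is closed. The hypotheses of Corollary \ref{p90} are exactly those of the present statement: a pointwise asymptotically commutative system generated by homeomorphisms converging strongly to $f$. Commutativity of $\F$ is extra, and it also gives asymptotic commutativity if it is needed. Corollary \ref{p90} therefore yields $P(X,\F)=L(X,\F)$. Hence $L(X,\F)=P(X,\F)=Q(X,\F)$, and since $M(X,\F)$ is squeezed between $L(X,\F)$ and $Q(X,\F)$, all four relations coincide.

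The only step needing care is making sure Corollary \ref{p90} applies, i.e.\ that closedness of $P(X,\F)$ really comes from closedness of $Q(X,\F)$. This is the main point of the argument, but once $P(X,\F)=Q(X,\F)$ is in hand it is routine. If one prefers not to rely on Corollary \ref{p90} directly, one can argue instead with Theorem \ref{p17}. By Theorem \ref{inv}(1) the orbit $\mathcal{O}_\F((x,y))$ of a proximal pair lies in $P(X,\F)$. Its closure then lies in $\overline{P(X,\F)}=\overline{Q(X,\F)}=Q(X,\F)=P(X,\F)$. Theorem \ref{p17} then gives $(x,y)\in L(X,\F)$.
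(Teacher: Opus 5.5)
Your proof is correct and is essentially the paper's argument: $Q(X,\F)\subseteq P(X,\F)$ together with $P(X,\F)\subseteq Q(X,\F)$ gives $P(X,\F)=Q(X,\F)$, which is closed by the Proposition on closedness of $Q(X,\F)$, so Corollary \ref{p90} yields $P(X,\F)=L(X,\F)$ and $M(X,\F)$ is squeezed between $L(X,\F)$ and $Q(X,\F)$. You also spell out the trivial converse and the elementary inclusions, which the paper leaves implicit; your alternative via Theorem \ref{p17} just unpacks Corollary \ref{p90}, but for homeomorphisms $\mathcal{O}_\F$ is two-sided, so Theorem \ref{inv}(1) needs its easy backward analogue (via uniform continuity of $\omega_r^{-1}$), or you should note that the proof of Theorem \ref{p17} only uses the forward orbit.
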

\begin{proof}
Since $Q(X,\F)\subseteq P(X,\F)$, $P(X,\F)=Q(X,\F)$ and consequently $P(X,\F)$ is closed and hence by Corollary \ref{p90}, $P(X,\F)=L(X,\F)$. Also we have $L(X,\F)\subseteq M(X,\F)\subseteq Q(X,\F)$ and hence $L(X,\F)=M(X,\F)=Q(X,\F)=P(X,\F)$.
\end{proof}

\section*{Author Contributions}
Sushmita Yadav: Conception and design of study, Writing original draft, writing review, and editing.\\

Puneet Sharma: Conception and design of the study, writing review, and editing.

\section*{Funding}
This work is supported by the MoE (India) for financial support and the National Board for Higher Mathematics (NBHM) Grant No. 02011/27/2023NBHM(R.P)/R \& D II/6282 for financial support.

\section*{Declarations}

\textbf{Conflict of interest:}  The authors declare that they have no conflict of interest.

\end{sloppypar}
\end{document}